\begin{document}

\title[]
{A Berestycki-Lions type result and applications }

\address{Claudianor O. Alves \newline Departamento de Matem\'atica,
	Universidade Federal de Cam\-pi\-na Grande,
	58429-010, Campina Grande - PB, Brazil}

\email{}

\author[C. O. Alves, R.C. Duarte and M.A.S. Souto]{Claudianor O. Alves, Ronaldo C. Duarte and Marco A. S. Souto}



\thanks{C.O. Alves was partially supported by CNPq/Brazil Proc. 304036/2013-7, R.C. Duarte was partially supported by CAPES  and M. A.S. Souto was partially
	supported by  CNPq/Brazil Proc. 305384/2014-7}

\address{Ronaldo C. Duarte \newline Departamento de Matem\'atica,
         Universidade Federal de Cam\-pi\-na Grande,
         58429-010, Campina Grande - PB, Brazil}

\email{ronaldocesarduarte@gmail.com}

\address{Marco A. S. Souto \newline Departamento de Matem\'atica,
	Universidade Federal de Cam\-pi\-na Grande,
	58429-010, Campina Grande - PB, Brazil}

\email{marco.souto.pb@gmail.com}

\subjclass[2010]{35J60;  35A15, 49J52.} 
\keywords{Nonlinear elliptic equations, Variational methods, Nonsmooth analysis}

\begin{abstract}
In this paper we show an abstract theorem involving the existence of critical points for a functional $I$, which permit us to prove the existence of solutions for a large class of  Berestycki-Lions type problems.  In the proof of the abstract result we apply the deformation lemma on a special set associated with $I$, which we call of Pohozaev set.
\end{abstract}

\maketitle

\newtheorem{theorem}{Theorem}[section]
\newtheorem{lemma}[theorem]{Lemma}
\newtheorem{proposition}[theorem]{Proposition}
\newtheorem{corollary}[theorem]{Corollary}
\newtheorem{remark}[theorem]{Remark}
\newtheorem{definition}[theorem]{Definition}
\newtheorem{claim}[theorem]{Claim}
\renewcommand{\theequation}{\thesection.\arabic{equation}}

\section{Introduction}

At the last years a lot of authors have dedicated a special attention for existence of solution for elliptic problems of the type  
\begin{equation}\label{BL}
- \Delta u   = g(u), \quad \mbox{in} \quad \mathbb{R}^N, 
\end{equation}
where $N \geq 2$, $\Delta$ denotes the Laplacian operator and $g$ is a continuous function verifying some conditions.  

The main motivation comes from of the seminal paper due to Berestycki and Lions \cite{berest}, which has considered the existence of solution for  (\ref{BL}) by assuming that $N \geq 3$ and the following conditions on $g$:
$$
- \infty < \liminf_{s \to 0^+}\frac{g(s)}{s} \leq \limsup_{s \to 0^+}\frac{g(s)}{s}\leq -m<0, 
$$
$$
\limsup_{s \to 0^+}\frac{g(s)}{s^{2^{*}-1}}\leq 0, 
$$
$$
\mbox{there is} \quad \xi>0 \, \, \mbox{such that} \,\, G(\xi)>0, 
$$
where $G(s)=\int_{0}^{s}g(t)\,dt$.

In \cite{BGK}, Berestycki, Gallouet and Kavian have studied the case where $N=2$ and the nonlinearity $g$ possesses an exponential growth of the type
$$
\limsup_{s \to 0^+}\frac{g(s)}{e^{\beta s^2}}=0, \quad \forall \beta >0.
$$

In the two  above mentioned papers, the authors have used the variational method to prove the existence of solution for (\ref{BL}). The main idea is to solve the minimization problems 
$$
\min \left\{\frac{1}{2}\int_{\mathbb{R}^N}|\nabla u|^{2} \,dx \,:\, \int_{\mathbb{R}^N}G(u)\,dx=1  \right\}  
$$
and 
$$
\min \left\{\frac{1}{2}\int_{\mathbb{R}^N}|\nabla u|^{2}\,dx \,:\, \int_{\mathbb{R}^N}G(u)\,dx=0  \right\}  
$$
for $N \geq 3$ and $N=2$ respectively. After that, the authors showed that the minimizer functions of the above problems are in fact ground state solutions of (\ref{BL}). By a ground state solution, we mean a solution $u \in H^{1}(\mathbb{R}^N) \setminus \{0\}$ that satisfies
$$
E(u) \leq E(v) \quad \mbox{for all nontrival solution} \  v  \ \text{of} \ (\ref{BL}),
$$ 
where $E:H^{1}(\mathbb{R}^N) \to \mathbb{R}$ is the energy functional associated to (\ref{BL}) given by
$$
E(u)=\frac{1}{2}\int_{\mathbb{R}^N}|\nabla u|^{2}\,dx - \int_{\mathbb{R}^N}G(u)\,dx.
$$

After, Jeanjean and Tanaka in \cite{JJTan} showed that the mountain pass level of  $E$ is a critical level and it is 
indeed the lowest critical level.

A version of the problem (\ref{BL}) for the critical case have been  made in Alves, Souto and Montenegro \cite{AlvesSoutoMontenegro} for $N \geq 3$ and $N=2$, see also Zhang and Zhou \cite{ZZ} for $N=3$. The reader can found in Alves, Figueiredo and Siciliano \cite{AGS}, Chang and Wang \cite{ChangWang} and Zhang, do \'O and Squassina \cite{ZOS} the same type of results involving the fractional Laplacian operator, more precisely, for a problem like 
\begin{equation}\label{fracionario}
(- \Delta)^{\alpha}u = g(u), \quad \mbox{in} \quad \mathbb{R}^N, 
\end{equation}
with $\alpha \in (0,1)$ and $N \geq 1$. 

We would like point out that the method used in the above papers works well because $g$ does not depend on $x$, $-\Delta$ and $(-\Delta)^{\alpha}$ are homogeneous operators and there is a Pohozaev identity associated  with (\ref{BL}) and (\ref{fracionario}) . When one of these facts is not verified it is necessary to change the arguments. In \cite{AW}, Pomponio and Watanabe have studied the existence of solution for (\ref{BL}) changing $-\Delta$ by $-\Delta_p - \Delta_q$, that is, the following problem has been considered 
\begin{equation}\label{PW}
- \Delta_p u- \Delta_q u   = g(u), \quad \mbox{in} \quad \mathbb{R}^N. 
\end{equation}
In this case, there is a loss of homogeneity in the operator,  then the arguments used in the previous paper does not work well. To overcome this difficulty, the authors have used a result found in Jeanjean \cite[Theorem 1.1]{jeanjean} to solve the problem. However, the fact that above problem has a Pohozaev identity is crucial in their approach.  In \cite{AP}, Azzollini and Pomponio have considered the existence of solution for the following class of problem 
\begin{equation}\label{AP}
- \Delta u + V(x)u   = g(u), \quad \mbox{in} \quad \mathbb{R}^N. 
\end{equation}
By supposing some geometry conditions on $V$, the authors also used  \cite[Theorem 1.1]{jeanjean} as well as the fact that there is a  Pohozaev identity associated with the problem. 

The read is invited to see that in the papers \cite{AW} and \cite{AP} a Pohozaev identity is a key point to prove that a sequence of approximate solutions for (\ref{PW}) and (\ref{AP}) are  bounded.

Motivated by above papers, we were led for the following problem: If $g$ is a discontinuous function how to get a solution for problems (\ref{BL}), (\ref{fracionario}) or (\ref{PW}) ?  The main difficulty is related to the fact that the classical variational methods to $C^1$ functional cannot be applied. Moreover, in this situation, there is no  Pohozaev identity associated with the problem. A second problem that we are interesting is  a version of (\ref{PW}) involving fractional Laplacian problem, more precisely, 
\begin{equation}\label{2fracionario}
(- \Delta)^{\alpha} u + (- \Delta)^{\beta} u   = g(u), \quad \mbox{in} \quad \mathbb{R}^N,
\end{equation}
where $\alpha, \beta \in (0,1)$, $N > 2\max\{\alpha, \beta\}$ and $g$ be a continuous function. This problem becomes interesting, because the authors do not know a Pohozaev identity associated with it. Finally, another problem that we are interesting is the following Anisotropic problem 
\begin{equation} \label{ANP0}
-	\sum_{i=1}^{n}\frac{\partial}{\partial x_i}\left(\left|\frac{\partial u}{\partial x_i} \right|^{p_i-2}\frac{\partial u}{\partial x_i}\right)=g(u), \quad \mbox{in} \quad \mathbb{R}^N,
\end{equation}
where $1< p_1 < ...< p_n <N$ and $g$ be a continuous function. Here, as in last problem the authors do not know a Pohozaev identity associated with (\ref{ANP0}).  From these above commentaries a new approach must be developed to solved them. Having theses problems in mind, in the present paper we show that there is an abstract result behind of the famous result due to Berestycki and Lions \cite{berest}, which can be used to solve a lot of problems where the existence of a Pohozaev identity is not clear. An advantage of our main result is related to the fact that it can be applied for a lot of problems where the nonlinearity $g$ is continuous, and also, for some problems where the nonlinearity $g$ is discontinuous.

Before stating our abstract theorem we need to fix some notations. In the sequel $(X, ||\,\,\,||)$ is a reflexive Banach space and $\psi_{1},..., \psi_{n}, \Phi: X \rightarrow \mathbb{R}$ are continuous functionals verifying: \\

\noindent There is an application $\ast: [0, \infty) \times X \rightarrow X$ and $\lambda_{1},...,\lambda_{n}, \lambda_{\Phi} \in \mathbb{R}$ such that 

		\noindent $(X_1)$ \,\, $\psi_{i}(\ast(t,u)) = t^{\lambda_{i}}\psi_{i}(u);$ \\
		\noindent $(X_2)$ \,\, $\Phi(\ast(t,u)) = t^{\lambda_{\Phi}}\Phi(u);$\\
		\noindent $(X_3)$ \,\, $0 < \max\left\{\lambda_{1}, ..., \lambda_{p}\right\}< \lambda_{\Phi};$\\
		\noindent $(X_4)$ \,\, $\ast(0,u)=0$, \quad $\forall u \in X;$\\
		\noindent $(X_5)$ For each $u \in X$ fixed, the application $t \longmapsto \ast(t,u)$ is continuous.\\

\noindent  There are subsets $X^+, X^r \subset X$ that are weak closed and a function $Q:X^+ \rightarrow X^r$ verifying: \\

\noindent $(X_6)$ \,\,  $\psi_{i}(Q(u)) \leq \psi_{i}(u)$, $ \forall u \in X^+$, \, $\forall i \in \{1,...,n\}$; \\
\noindent $(X_7)$ \,\,  $\Phi(Q(u))\geq \Phi(u)$, $ \forall u \in X^+$;\\
\noindent $(X_8)$\,\,  If $u\in X^r$, then $\ast(t,u) \in X^r$ for all $t \geq 0$.\\

Before writing our next assumptions, we would like to fix the following notations
$$
u_{t}:=\ast(t,u), \quad \forall t \geq 0 \quad \mbox{and} \quad u \in X,
$$
$$
J(u):= \sum_{i=1}^{n} \psi_{i}(u), \quad \forall u \in X
$$
and
$$
I(u)=J(u)- \Phi(u), \quad \forall u \in X.
$$
Moreover of the above conditions, we also assume the following: \\

\noindent  $(F_{1})$ $\Phi(0)=0$ and there is $u \in X$ such that $\Phi(u)>0.$ \\
\noindent  $(F_{2})$ $\psi_{i}(u) \geq 0$ for all $i \in \{1,...,n\}$ and $u \in X$. Moreover, $J(u) = 0 \Leftrightarrow u=0$.\\
\noindent  $(F_{3})$ There exists $r>0$ such that if $0<||u||< r$, then  
	$$
	\sum_{i=i}^{n}\lambda_{i}\psi_{i}(u)>\lambda_{\Phi}\Phi(u).
	$$
\noindent $(F_{4})$ For any sequence $(u_{k})$ satisfying $\Phi(u_{k})\geq 0$ and $J(u_{k}) \rightarrow 0$, we have 
	$ ||u_{k}|| \rightarrow 0.$ Moreover, if  $(J(u_{k}))$ is bounded, then $(u_{k})$ is also bounded. \\

\noindent $(F_{5})$ If $(u_{k}) \subset X^r$ is weakly convergent for $u$ in $X$, then 
	$$
	\limsup_{k \rightarrow \infty}\Phi(u_{k}) \leq \Phi(u). 
	$$
\noindent $(F_{6})$ If $(u_{k})$ is weakly convergent for $u$ in $X$, then   
	$$ 
	\psi_{i}(u) \leq \liminf_{k \rightarrow \infty}\psi_{i}(u_{k}), \quad \forall i \in \{1,2,....,n\}.
	$$

In throughout this article, we denote by $\mathcal{P}$ and $\mathcal{P}^+$ the sets 
$$
	\mathcal{P}=\left\{u \in X\setminus \left\{0\right\}: \lambda_{1}\psi_{1}(u)+...+\lambda_{n}\psi_{n}(u)=\lambda_{\Phi}\Phi(u)\right\}
$$
and
$$
\mathcal{P}^+=\left\{u \in X^+  \setminus \left\{0\right\}: \lambda_{1}\psi_{1}(u)+...+\lambda_{n}\psi_{n}(u)=\lambda_{\Phi}\Phi(u)\right\}.
$$

The set $\mathcal{P}$ will called of {\it Pohozaev set} and associated with it we have the operator 
$$
	K(u)=\lambda_{1}\psi_{1}(u)+...+\lambda_{n}\psi_{n}(u) - \lambda_{\Phi} \Phi(u),
$$
which will call of {\it Pohozaev operator}. Note that $K^{-1}(\{0\})=\mathcal{P}\cup \left\{0\right\}$.

\vspace{0.5 cm}

Now, we are ready to state our main result.

\begin{theorem}\label{T1}
	Let $X$, $\Phi$, $\psi_{1}$,..., $\psi_{n}$ satisfying $(X_{1})-(X_8)$ and $(F_1)-(F_6)$. If 
	$$
	\inf_{w \in \mathcal{P}}I(w)=\inf_{w \in \mathcal{P}^+}I(w),
	$$ 
	then there is $u \in \mathcal{P}$ such that $I(u)= \displaystyle \inf_{w \in \mathcal{P}}I(w)>0$. If $I$ is locally Lipschitz, then $u$ is a critical point of $I$ in $X$, that is, 
	$ 0 \in \partial I (u). $
\end{theorem}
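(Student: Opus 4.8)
The plan is to treat $d:=\inf_{w\in\mathcal P}I(w)$ as a constrained minimum over the Pohozaev set and to proceed in three steps: show $d>0$, show $d$ is attained at some $u\in\mathcal P$, and show this minimizer is a (nonsmooth) critical point. The backbone of each step is the scaling identity coming from $(X_1)$--$(X_2)$: writing $u_t=\ast(t,u)$,
\[
I(u_t)=\sum_{i=1}^n t^{\lambda_i}\psi_i(u)-t^{\lambda_\Phi}\Phi(u),
\]
so that $\frac{d}{dt}I(u_t)=t^{-1}K(u_t)$ and $u\in\mathcal P$ exactly when $t=1$ is a critical point of $t\mapsto I(u_t)$. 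On $\mathcal P$ the constraint $K(u)=0$ lets me eliminate $\Phi$ and obtain $I(u)=\sum_i(1-\lambda_i/\lambda_\Phi)\psi_i(u)$, where every coefficient is positive by $(X_3)$ (since $\lambda_i<\lambda_\Phi$) and every $\psi_i(u)\ge 0$ by $(F_2)$; hence $I\ge c\,J$ on $\mathcal P$ with $c=\min_i(1-\lambda_i/\lambda_\Phi)>0$. To see $d>0$ I would first use $(F_3)$ to note $\mathcal P$ is bounded away from $0$ (if $0<\|u\|<r$ then $K(u)>0$, so $u\notin\mathcal P$), and then $(F_4)$: if $J(u_k)\to0$ along $\mathcal P$ then, since $\Phi\ge0$ there, $\|u_k\|\to0$, contradicting $\|u_k\|\ge r$. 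Thus $\inf_{\mathcal P}J>0$ and $d\ge c\inf_{\mathcal P}J>0$.

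For attainment I would exploit the hypothesis $d=\inf_{\mathcal P^+}I$ and pick a minimizing sequence $(u_k)\subset\mathcal P^+\subset X^+$. The chain $I\ge cJ$ bounds $J(u_k)$, so $(u_k)$ is bounded by the second half of $(F_4)$. The role of $Q$ and $X^r$ is to regain compactness: set $v_k=Q(u_k)\in X^r$; by $(X_6)$--$(X_7)$, $\psi_i(v_k)\le\psi_i(u_k)$ and $\Phi(v_k)\ge\Phi(u_k)>0$, so $v_k\neq0$, $J(v_k)>0$, and $v_k$ admits a unique rescaling $w_k=\ast(t_k,v_k)\in\mathcal P$ (the scaling path has a unique maximum because $t\mapsto\sum_i\lambda_i t^{\lambda_i-\lambda_\Phi}\psi_i(v_k)$ is strictly decreasing). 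By $(X_8)$ we keep $w_k\in X^r$, and comparing the scaling expansions termwise gives $I(w_k)\le I(\ast(t_k,u_k))\le\max_t I(\ast(t,u_k))=I(u_k)$, so $(w_k)$ is again minimizing, bounded, and lies in the weakly closed set $X^r$. Passing to a weak limit $w_k\rightharpoonup w$, I would use $(F_5)$ ($\limsup_k\Phi(w_k)\le\Phi(w)$) to exclude $w=0$ (otherwise $\Phi(w_k)\to0$, which via $K(w_k)=0$ forces $J(w_k)\to0$ and $I(w_k)\to0\neq d$), and then $(F_5)$--$(F_6)$ to deduce $K(w)\le0$, whence $\Phi(w)>0$. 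Rescaling $w$ onto $\mathcal P$ at its maximizing parameter $s\le1$ and using $s^{\lambda_i}\le1$ together with weak lower semicontinuity of the $\psi_i$ yields $I(\ast(s,w))\le\liminf_k I(w_k)=d$; since $\ast(s,w)\in\mathcal P$ the reverse inequality holds, so $u:=\ast(s,w)$ attains $d$.

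For the final assertion, assume $I$ is locally Lipschitz and $0\notin\partial I(u)$, and argue by contradiction with a nonsmooth deformation lemma applied around the level $d$. The characterization I would use is the mountain-pass one hidden in the scaling: the path $\theta\mapsto\ast(\theta,u)$ runs from $\ast(0,u)=0$ (where $I=0$) to $\ast(T,u)$ with $I<0$ for $T$ large, attains its maximum $d$ only at $\theta=1$ (the point $u$), and there crosses $\mathcal P$ because $K(\ast(\theta,u))$ changes sign from $+$ to $-$. If $u$ is not critical, the deformation $\eta$ lowers $I$ near $u$ while fixing the region where $I$ is far from $d$; composing it with this path produces $\tilde\gamma(\theta)=\eta(1,\ast(\theta,u))$ with $\max_\theta I(\tilde\gamma(\theta))<d$. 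On the other hand, $\eta$ fixes the endpoints (small $\theta$, where by $(F_3)$ still $K>0$, and $\theta=T$, where $K<0$), so $K\circ\tilde\gamma$ still changes sign and $\tilde\gamma$ meets $\{K=0\}$. The delicate point, which I expect to be the main obstacle, is to guarantee that this crossing occurs at a genuine point of $\mathcal P$ rather than at $0$; this is exactly what $(F_3)$ (strict positivity of $K$ on the punctured small ball) and the fact that $\eta$ does not move points where $I$ is close to $0$ are there to secure. Once the crossing is known to lie in $\mathcal P$ we get $I\ge d$ there, contradicting $\max_\theta I(\tilde\gamma(\theta))<d$; hence $0\in\partial I(u)$.
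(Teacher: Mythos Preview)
Your proof is correct and follows the same three-step strategy as the paper: positivity of $d$ via the identity $I=\sum_i(1-\lambda_i/\lambda_\Phi)\psi_i$ on $\mathcal P$ combined with $(F_3)$--$(F_4)$; attainment by symmetrizing a minimizing sequence into $X^r$, rescaling onto $\mathcal P$, and projecting the weak limit back onto $\mathcal P$; and the deformation-lemma contradiction using that every admissible path from $0$ to the region $\{I<0\}$ must cross $\mathcal P$ (the paper isolates this as Lemmas~3.1--3.2, including the ``last zero'' trick you flag as the delicate point). The only minor variation is in the attainment step: the paper fixes the projection parameter $t^*$ of the weak limit and bounds $I(w_k)\ge I((w_k)_{t^*})$ before passing to the liminf, whereas you first deduce $K(w)\le 0$ to force the projection parameter $s\le 1$ and then compare $I(\ast(s,w))=\sum_i(1-\lambda_i/\lambda_\Phi)s^{\lambda_i}\psi_i(w)$ with $\liminf_k I(w_k)$; both routes are standard and equivalent.
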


The plan of the paper is as follows: In Section 2 we have proved some preliminary results that  will be used in Section 3 to show the Theorem \ref{T1}. In Section 4 we study the existence of solution for a large class of problem, which includes the problem
$$
(- \Delta)^{\alpha} u + (- \Delta)^{\beta} u   = g(u), \quad \mbox{in} \quad \mathbb{R}^N,   \eqno{(P_1)}
$$
where $\alpha, \beta \in (0,1)$, $(- \Delta)^{\alpha}$ and $(- \Delta)^{\beta}$  denote the fractional Laplacian of order $\alpha$ and $\beta$ respectively, $N > 2 \max\{\alpha, \beta\} $ and $g(s)=f(s)-s$ is a continuous function. In Section 5, we consider the existence of solution for an Anisotropic  problem like 
$$
-\sum_{i=1}^{n}\frac{\partial}{\partial x_i}\left(\left|\frac{\partial u}{\partial x_i} \right|^{p_i-2}\frac{\partial u}{\partial x_i}\right)=g(u), \quad \mbox{in} \quad \mathbb{R}^N, \eqno({P_2})
$$
where $1<p_1<...<p_{n}<N$ and $g=f(s)-|s|^{p_0-2}s$ is a continuous function. Finally, in Section 6, we establish the existence of solution for a class of discontinuous problem of the type  
$$
-\Delta u(x)  \in \partial G(u(x)), \quad \mbox{a.e. in} \quad \mathbb{R}^N, \eqno{(P_3)}
$$
where $N \geq 1$, $G$ is the primitive of a function $g(s)=f(s)-s$, which can have a finite numbers of discontinuity and $\partial G(s)$ is the generalized gradient of $G$ at $s \in \mathbb{R}$.

\section{Preliminary results}

In this section, we have showed some technical lemmas that will be used in the next section to prove Theorem \ref{T1}.

\begin{lemma}\label{pr2}
	Let $u \in X$ satisfying $\Phi(u)>0$. Then, there exists a unique $t^{\ast}>0$ such that $u_{t^{\ast}} \in \mathcal{P}$. Hence, 
	$$
	I(u_{t^{\ast}})= \max_{t\geq 0} I(u_{t})
	$$
and for $u \in \mathcal{P}$,
$$
I(u)= \max_{t\geq 0} I(u_{t}).
$$
  
\end{lemma}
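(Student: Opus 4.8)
The plan is to reduce everything to the one--variable analysis of the real function
$$
h(t):=I(u_t)=\sum_{i=1}^{n}\psi_{i}(u_t)-\Phi(u_t), \qquad t\geq 0.
$$
By $(X_1)$ and $(X_2)$ this function has the explicit form $h(t)=\sum_{i=1}^{n}t^{\lambda_i}\psi_i(u)-t^{\lambda_\Phi}\Phi(u)$, a finite sum of power functions and hence smooth on $(0,\infty)$; note that no differentiability of $I$ on $X$ is needed at this stage. A direct computation gives, for $t>0$,
$$
h'(t)=\frac{1}{t}\Big(\sum_{i=1}^{n}\lambda_i t^{\lambda_i}\psi_i(u)-\lambda_\Phi t^{\lambda_\Phi}\Phi(u)\Big)=\frac{K(u_t)}{t},
$$
so the critical points of $h$ in $(0,\infty)$ are exactly the values $t$ with $K(u_t)=0$. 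Since $\Phi(u_t)=t^{\lambda_\Phi}\Phi(u)>0$ for $t>0$ while $\Phi(0)=0$ by $(F_1)$, we have $u_t\neq 0$, whence $h'(t)=0$ is equivalent to $u_t\in\mathcal{P}$.

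Next I would record the endpoint behaviour of $h$. By $(X_4)$, $(F_1)$ and $(F_2)$ one has $h(0)=I(0)=J(0)-\Phi(0)=0$. Using $(X_4)$--$(X_5)$, $u_t\to 0$ as $t\to0^{+}$, so $\|u_t\|<r$ for $t$ small; then $(F_3)$ yields $K(u_t)>0$, i.e. $h'(t)>0$ for small $t>0$, and together with $h(0)=0$ this gives $h(t)>0$ for all small $t>0$. At the other end, since $\lambda_\Phi>\lambda_i$ for every $i$ by $(X_3)$ and $\Phi(u)>0$, the term $-t^{\lambda_\Phi}\Phi(u)$ dominates and $h(t)\to-\infty$ as $t\to\infty$. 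Consequently the maximum of $h$ on $[0,\infty)$ is positive and is attained at some interior point $t^{\ast}>0$, which is a critical point and therefore satisfies $u_{t^{\ast}}\in\mathcal{P}$.

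The key point is uniqueness, which I would obtain from a monotonicity argument. Set
$$
\gamma(t):=\frac{K(u_t)}{t^{\lambda_\Phi}}=\sum_{i=1}^{n}\lambda_i\,\psi_i(u)\,t^{\lambda_i-\lambda_\Phi}-\lambda_\Phi\Phi(u), \qquad t>0,
$$
so that $h'(t)=t^{\lambda_\Phi-1}\gamma(t)$ and $h'$ has the same sign as $\gamma$. Because $\Phi(u)>0$ implies $u\neq0$, condition $(F_2)$ guarantees $J(u)>0$, hence $\psi_i(u)>0$ for at least one $i$; since each exponent $\lambda_i-\lambda_\Phi$ is negative by $(X_3)$, every summand $\lambda_i\psi_i(u)t^{\lambda_i-\lambda_\Phi}$ is nonincreasing and at least one is strictly decreasing. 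Thus $\gamma$ is strictly decreasing on $(0,\infty)$, with $\gamma(t)\to+\infty$ as $t\to0^{+}$ and $\gamma(t)\to-\lambda_\Phi\Phi(u)<0$ as $t\to\infty$; it therefore has a unique zero $t^{\ast}$. This shows simultaneously that $t^{\ast}$ is the unique element of $(0,\infty)$ with $u_{t^{\ast}}\in\mathcal{P}$, that $h$ is strictly increasing on $(0,t^{\ast})$ and strictly decreasing on $(t^{\ast},\infty)$, and hence that $I(u_{t^{\ast}})=\max_{t\geq0}I(u_t)$.

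Finally, for $u\in\mathcal{P}$ I would first observe that the defining relation $\lambda_{1}\psi_{1}(u)+\dots+\lambda_{n}\psi_{n}(u)=\lambda_\Phi\Phi(u)$, combined with $\psi_i(u)\geq0$, $J(u)>0$ (from $(F_2)$ and $u\neq0$) and $\lambda_i,\lambda_\Phi>0$, forces $\Phi(u)>0$. Hence the previous analysis applies to this $u$; since $u=u_1$ already lies in $\mathcal{P}$, uniqueness of the critical point gives $t^{\ast}=1$, and therefore $I(u)=I(u_1)=\max_{t\geq0}I(u_t)$. The only genuinely delicate step in the whole argument is the strict monotonicity of $\gamma$, which is precisely what rules out multiple intersections of the ray $t\mapsto u_t$ with the Pohozaev set; everything else is a routine translation of $(X_1)$--$(X_5)$ and $(F_1)$--$(F_3)$ into statements about the scalar function $h$.
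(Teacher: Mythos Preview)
Your argument is correct and follows the same one--variable strategy as the paper: write $h(t)=I(u_t)$ explicitly via $(X_1)$--$(X_2)$, extract a positive interior maximum from the endpoint behaviour, and prove uniqueness by dividing the relation $K(u_t)=0$ through by a suitable power of $t$ so that the resulting function is strictly monotone. The only cosmetic differences are that the paper normalises by $t^{\lambda_{\max}}$ (yielding an increasing right-hand side) while you normalise by $t^{\lambda_\Phi}$ (yielding your decreasing $\gamma$), and that the identity ``$u_1=u$'' is not actually among the abstract hypotheses $(X_1)$--$(X_5)$; your conclusion is nevertheless safe, since $(X_1)$--$(X_2)$ with $t=1$ give $K(u_1)=K(u)=0$ and $I(u_1)=I(u)$, which is all the final step requires.
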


\begin{proof}
	Let $u \in X$ satisfying $\Phi(u)>0$. By $(F_{1})$, $u \neq 0$. For each $t\geq0$, we fix
	$$
	h(t):= I(u_{t}), \quad \forall t \in [0,+\infty).
	$$
	By $(X_1)-(X_2)$,
	$$
	h(t)= t^{\lambda_{1}} \psi_{1}(u)+ ... + t^{\lambda_{n}} \psi_{n}(u)- t^{\lambda_{\Phi}}\Phi(u).
	$$
	Now, From $(X_3)$ and $(F_{2})$, $h(t)>0$ for $t>0$ large small and 
	$$
	\lim_{t \rightarrow +\infty}h(t)=- \infty.
	$$
	These informations ensure that $h$ possesses a maximum in some $t^* \in (0, +\infty)$, that is, 
	$$
	I(u_{t^{\ast}})= \max_{t>0}I(u_{t}).
	$$ 
	Since $h'(t^{\ast})=0$, we derive that $u_{t^{\ast}} \in \mathcal{P}$.  To show the uniqueness of $t^{\ast}$, we first recall that 
	$
	u_{t} \in \mathcal{P}
	$
	if, and only if, 
	$$
	\lambda_{1}t^{\lambda_{1}}\psi_{1}(u)+...+\lambda_{n}t^{\lambda_{n}}\psi_{n}(u)=\lambda_{\Phi}t^{\lambda_{\Phi}}\Phi(u).
	$$
In the sequel, without loss of generality, we assume that $\lambda_{1}= \max \left\{ \lambda_{i}\right\}_{i=1}^{n}$. Then, $u_{t} \in \mathcal{P}$ if, and only if, 
	\begin{equation} \label{eq42}
		\lambda_{1}\psi_{1}(u) = - \sum_{i=2}^{n}\lambda_{i}t^{(\lambda_{i}-\lambda_{1})}\psi_{i}(u) + \lambda_{\Phi}t^{(\lambda_{\Phi} - \lambda_{1})}\Phi(u).
	\end{equation}
Combining  $(X_3)$ with $(F_{2})$ and the fact that  $\Phi(u)>0$, it follows that the function 
$$
m(t)=- \sum_{i=2}^{n}\lambda_{i}t^{(\lambda_{i}-\lambda_{1})}\psi_{i}(u) + \lambda_{\Phi}t^{(\lambda_{\Phi} - \lambda_{1})}\Phi(u), \quad \mbox{for} \quad t \geq 0
$$ 
has a positive derivative in $(0, + \infty)$. Therefore, $m$ is increasing, $m(0)=0$ and $m(t) \to +\infty$ as $t \to +\infty$. These facts guarantee the existence of a unique $t>0$ satisfying (\ref{eq42}).
\end{proof}

\begin{corollary}
The Pohozaev set $\mathcal{P}$ is not empty. 
\end{corollary}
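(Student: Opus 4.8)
The plan is to produce a single explicit element of $\mathcal{P}$ by combining the existence hypothesis $(F_1)$ with Lemma \ref{pr2}. Condition $(F_1)$ guarantees a point $u \in X$ with $\Phi(u) > 0$, which is precisely the input demanded by Lemma \ref{pr2}. Applying that lemma to this particular $u$ yields a scalar $t^{\ast} > 0$ such that $u_{t^{\ast}} = \ast(t^{\ast}, u) \in \mathcal{P}$, and hence $\mathcal{P} \neq \emptyset$.

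Concretely, I would first fix $u \in X$ with $\Phi(u) > 0$, whose existence is asserted in $(F_1)$. Then I would feed $u$ into Lemma \ref{pr2} to obtain the fibering parameter $t^{\ast} > 0$ together with the membership $u_{t^{\ast}} \in \mathcal{P}$. Nothing further is required, since the conclusion of Lemma \ref{pr2} already places $u_{t^{\ast}}$ in the Pohozaev set by definition.

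There is essentially no obstacle: the corollary is an immediate consequence of Lemma \ref{pr2}, and the only thing to check is that the lemma's hypothesis is satisfied, which is exactly the content of $(F_1)$. The single point worth a brief remark is that $\mathcal{P}$ is defined as a subset of $X \setminus \{0\}$, so one should note that the point produced is genuinely nonzero; this is consistent because $t^{\ast} > 0$ and $\Phi(u) > 0$ give $\Phi(u_{t^{\ast}}) = (t^{\ast})^{\lambda_{\Phi}}\Phi(u) > 0$ by $(X_2)$, whence $u_{t^{\ast}} \neq 0$ since $\Phi(0) = 0$.
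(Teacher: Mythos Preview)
Your proposal is correct and matches the paper's own proof, which simply states that the result follows by combining Lemma \ref{pr2} with $(F_1)$. Your extra remark verifying $u_{t^{\ast}} \neq 0$ is harmless but unnecessary, since Lemma \ref{pr2} already asserts $u_{t^{\ast}} \in \mathcal{P} \subset X \setminus \{0\}$.
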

\begin{proof}
	The result follows by combining the last lemma with $(F_{1})$. 
\end{proof}

\begin{lemma}\label{lm3}
	There exists $r>0$ such that  
	$$
	||u||\geq r, \quad \forall u \in \mathcal{P}.
	$$
\end{lemma}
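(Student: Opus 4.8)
The plan is to read off the conclusion almost directly from hypothesis $(F_3)$, using the very same constant $r$ that appears there. The key observation is that membership in $\mathcal{P}$ forces the \emph{equality} $\sum_{i=1}^{n}\lambda_{i}\psi_{i}(u)=\lambda_{\Phi}\Phi(u)$, whereas $(F_3)$ guarantees the strict reverse inequality whenever $0<\|u\|<r$. These two facts are incompatible, so no point of $\mathcal{P}$ can lie strictly inside the ball of radius $r$.

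Concretely, I would argue by contradiction. Fix $r>0$ as provided by $(F_3)$ and suppose there were some $u\in\mathcal{P}$ with $\|u\|<r$. By the very definition of $\mathcal{P}$ we have $u\neq 0$, hence $0<\|u\|<r$, so that $(F_3)$ applies and yields
$$
\sum_{i=1}^{n}\lambda_{i}\psi_{i}(u)>\lambda_{\Phi}\Phi(u).
$$
On the other hand, $u\in\mathcal{P}$ means exactly that
$$
\sum_{i=1}^{n}\lambda_{i}\psi_{i}(u)=\lambda_{\Phi}\Phi(u),
$$
that is, $K(u)=0$. These two relations cannot hold simultaneously, which is the desired contradiction. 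Therefore $\|u\|\geq r$ for every $u\in\mathcal{P}$.

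I do not expect any genuine obstacle here: the lemma is essentially a restatement of $(F_3)$ phrased on the Pohozaev set, and the only thing that must be checked is that the excluded point $u=0$ is automatically ruled out, since the definition of $\mathcal{P}$ removes the origin and thus the hypothesis $0<\|u\|$ of $(F_3)$ is met. The one point worth flagging is purely cosmetic: the lower index in the sum of $(F_3)$ is printed as $\sum_{i=i}^{n}$, which should read $\sum_{i=1}^{n}$; the argument of course uses the intended index $i=1$.
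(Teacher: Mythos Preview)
Your argument is correct and is essentially the same as the paper's: both observe that $(F_3)$ forces $K(u)>0$ whenever $0<\|u\|<r$, which is incompatible with the defining condition $K(u)=0$ of $\mathcal{P}$. The only difference is cosmetic---you phrase it as a contradiction while the paper states it directly---so there is nothing to add.
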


\begin{proof}
	For all $u \in \mathcal{P}$,
	$$
	K(u)= \sum_{i=1}^{n}\lambda_{i}\psi_{i}(u) - \lambda_{\Phi}\Phi(u)=0.
	$$ 	
	On the other hand, by $(F_{3})$, there exists $r>0$ such that 
	$$
	\begin{array}{ll}
	K(u)&= \sum_{i=1}^{n}\lambda_{i}\psi_{i}(u) - \lambda_{\Phi}\Phi(u)>0, \quad \forall \,\, 0<||u||<r.
	\end{array} 
	$$
	Thus,  
	$$
	||u|| \geq r, \quad \forall u \in \mathcal{P}.
	$$
\end{proof}

\begin{proposition}\label{thm15}
The functional $I$ is bounded from below in $\mathcal{P}$ and there exists $u_{0}\in \mathcal{P}$ satisfying 
	$$
	I(u_{0})= \inf_{u \in \mathcal{P}}I(u).
	$$
Moreover, $\displaystyle \inf_{u \in \mathcal{P}}I(u)>0$.

\end{proposition}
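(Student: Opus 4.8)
The plan is to combine the scaling/projection device of Lemma \ref{pr2} with the direct method, using the symmetrization $Q$ to drive a minimizing sequence into $X^r$, where by $(F_5)$ the functional $\Phi$ becomes weakly upper semicontinuous. Write $c=\inf_{\mathcal{P}}I$. First I would record that on $\mathcal{P}$ the constraint $K(u)=0$ allows one to eliminate $\Phi$: substituting $\Phi(u)=\lambda_{\Phi}^{-1}\sum_i\lambda_i\psi_i(u)$ gives
$$
I(u)=\sum_{i=1}^{n}\Big(1-\frac{\lambda_i}{\lambda_{\Phi}}\Big)\psi_i(u),\qquad u\in\mathcal{P}.
$$
By $(X_3)$ each coefficient $1-\lambda_i/\lambda_{\Phi}$ is positive and by $(F_2)$ each $\psi_i\geq 0$, so $I\geq 0$ on $\mathcal{P}$, which yields the lower bound. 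For the strict positivity of the infimum I would argue by contradiction: if $c=0$, a minimizing sequence $(u_k)$ would force $\psi_i(u_k)\to 0$ for every $i$, hence $J(u_k)\to 0$, while $\Phi(u_k)=\lambda_{\Phi}^{-1}\sum_i\lambda_i\psi_i(u_k)\geq 0$; then $(F_4)$ gives $\|u_k\|\to 0$, contradicting the uniform bound $\|u_k\|\geq r$ of Lemma \ref{lm3}.

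For the existence of a minimizer I would start from a minimizing sequence in $\mathcal{P}^+$, which is where I invoke the hypothesis $\inf_{\mathcal{P}}I=\inf_{\mathcal{P}^+}I$ of Theorem \ref{T1}. Take $(v_k)\subset\mathcal{P}^+\subset X^+$ with $I(v_k)\to c$ and set $w_k=Q(v_k)\in X^r$. Conditions $(X_6)$--$(X_7)$ give $\psi_i(w_k)\leq\psi_i(v_k)$ and $\Phi(w_k)\geq\Phi(v_k)>0$, so $K(w_k)\leq K(v_k)=0$. Since $\Phi(w_k)>0$, Lemma \ref{pr2} provides a unique $t_k>0$ with $\ast(t_k,w_k)\in\mathcal{P}$; because the comparison function in the proof of Lemma \ref{pr2} is increasing, the sign $K(w_k)\leq 0$ forces $t_k\leq 1$, and by $(X_8)$ the point $z_k:=\ast(t_k,w_k)$ remains in $X^r$. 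Using $t_k^{\lambda_i}\leq 1$, the scaling law $(X_1)$ and $(X_6)$ one checks $I(z_k)\leq I(v_k)\to c$, while $z_k\in\mathcal{P}$ gives $I(z_k)\geq c$; hence $(z_k)$ is a minimizing sequence lying in $\mathcal{P}\cap X^r$.

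Now the direct method applies. From $I(z_k)\to c$ and $I\geq\delta J$ on $\mathcal{P}$, with $\delta=\min_i(1-\lambda_i/\lambda_{\Phi})>0$, the sequence $(J(z_k))$ is bounded, so the second part of $(F_4)$ makes $(z_k)$ bounded; by reflexivity and weak closedness of $X^r$ I may assume $z_k\rightharpoonup z_0\in X^r$. The semicontinuity hypotheses are tailored for exactly this situation: $(F_6)$ gives $\psi_i(z_0)\leq\liminf_k\psi_i(z_k)$ and $(F_5)$ gives $\limsup_k\Phi(z_k)\leq\Phi(z_0)$. Since $\Phi(z_k)=\lambda_{\Phi}^{-1}\sum_i\lambda_i\psi_i(z_k)\geq\lambda_{\Phi}^{-1}(\min_i\lambda_i)J(z_k)$ and $J(z_k)\geq I(z_k)\to c>0$, I obtain $\Phi(z_0)\geq\limsup_k\Phi(z_k)>0$, so in particular $z_0\neq 0$. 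Feeding the two semicontinuity inequalities into $K$ and using the identity $\sum_i\lambda_i\psi_i(z_k)=\lambda_{\Phi}\Phi(z_k)$ valid on $\mathcal{P}$ gives $K(z_0)\leq\lambda_{\Phi}\big(\liminf_k\Phi(z_k)-\limsup_k\Phi(z_k)\big)\leq 0$.

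To conclude, since $\Phi(z_0)>0$ Lemma \ref{pr2} yields a unique $s^{\ast}>0$ with $\ast(s^{\ast},z_0)\in\mathcal{P}$, and $K(z_0)\leq 0$ again forces $s^{\ast}\leq 1$. Then $(s^{\ast})^{\lambda_i}\leq 1$ together with the weak lower semicontinuity of the $\psi_i$ give
$$
I(\ast(s^{\ast},z_0))=\sum_{i=1}^{n}\Big(1-\frac{\lambda_i}{\lambda_{\Phi}}\Big)(s^{\ast})^{\lambda_i}\psi_i(z_0)\leq\sum_{i=1}^{n}\Big(1-\frac{\lambda_i}{\lambda_{\Phi}}\Big)\psi_i(z_0)\leq\liminf_{k}I(z_k)=c,
$$
whereas membership in $\mathcal{P}$ forces $I(\ast(s^{\ast},z_0))\geq c$; thus $u_0:=\ast(s^{\ast},z_0)$ is the sought minimizer. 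I expect the main obstacle to be precisely the failure of weak lower semicontinuity of $I=J-\Phi$ on all of $X$: the whole argument is organized so that the minimizing sequence can be pushed into $X^r$ (through $\mathcal{P}^+$ and $Q$), where $(F_5)$ restores the upper semicontinuity of $\Phi$, and so that the weak limit $z_0$ — which a priori satisfies only $K(z_0)\leq 0$ rather than $K(z_0)=0$ — can be projected back onto $\mathcal{P}$ without increasing the energy, the bound $s^{\ast}\leq 1$ being the decisive point.
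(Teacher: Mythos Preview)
Your proof is correct and follows the same overall strategy as the paper: start from a minimizing sequence in $\mathcal{P}^+$, symmetrize via $Q$ into $X^r$, project back onto $\mathcal{P}$, extract a weak limit in $X^r$, and project that limit onto $\mathcal{P}$. The tactical difference lies in the comparison steps. The paper never tracks the size of the scaling parameters: to pass from $v_k$ to the symmetrized-and-projected sequence it simply uses $I\bigl((Q[v_k])_{t_k^\ast}\bigr)\le I\bigl((v_k)_{t_k^\ast}\bigr)\le \max_{t>0}I((v_k)_t)=I(v_k)$, and at the weak limit it evaluates the whole sequence at the fixed parameter $t^\ast$ of the limit, writing $I(z_k)\ge I((z_k)_{t^\ast})=\sum_i (t^\ast)^{\lambda_i}\psi_i(z_k)-(t^\ast)^{\lambda_\Phi}\Phi(z_k)$ and then applying $(F_5)$--$(F_6)$ termwise. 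You instead exploit the sign of $K$ to obtain $t_k\le 1$ and $s^\ast\le 1$, and combine this with the constrained identity $I=\sum_i(1-\lambda_i/\lambda_\Phi)\psi_i$ on $\mathcal{P}$. Both routes are valid; the paper's use of the maximum property is slightly slicker since it bypasses any monotonicity analysis of $t\mapsto K(u_t)$, while your approach has the merit of making explicit why projecting onto $\mathcal{P}$ cannot raise the energy. A minor further difference is that the paper proves $\Phi$ of the weak limit is positive by contradiction (via $(F_4)$ and Lemma~\ref{lm3}) and establishes $I_\infty>0$ only at the end, whereas you prove $c>0$ first and then use it to bound $\Phi(z_k)$ from below directly.
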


\begin{proof}
	If $u \in \mathcal{P}$, $(F_{2})$ combined with $(X_3)$ gives 
	\begin{equation}\label{eq7}
		I(u)= J(u)-\Phi(u) = \sum_{i=1}^{n}\psi_{i}(u)-\sum_{i=1}^{n}\frac{\lambda_{i}}{\lambda_{\Phi}}\psi_{i}(u) = \sum_{i=1}^{n}\left(1-\frac{\lambda_{i}}{\lambda_{\Phi}}\right)\psi_{i}(u)\geq 0,
	\end{equation}
showing the boundedness of $I$ from below in $\mathcal{P}$. In what follows,
	$$
	I_{\infty}=\inf_{u \in \mathcal{P}}I(u)=\inf_{u \in \mathcal{P}^+}I(u)
	$$
	and $(u_k)$ is a minimizing sequence associated with $I_\infty$, that is, $(u_{k}) \subset \mathcal{P}^+$ and 
	$$
	I(u_{k})\rightarrow I_{\infty}.
	$$
	From (\ref{eq7}), $(J(u_{k}))$ is a bounded sequence. Hence, by $(F_{4})$, the sequence $(u_{k})$ is also bounded in $X$. On the other hand, by conditions $(X_6)-(X_7)$, we know that $(Q(u_{k})) \subset X^r$ and 
	$$
	J(Q(u_{k}))\leq J(u_{k}), \quad \forall k \in \mathbb{N}
	$$
	and
	$$
	\Phi(Q(u_{k})) \geq \Phi(u_{k})>0, \quad \forall k \in \mathbb{N}.
	$$ 
	By Lemma \ref{pr2}, there exits $t_{k}^{\ast}>0$ such that $(Q[u_{k}])_{t_{k}^{\ast}} \in \mathcal{P}$. Therefore,  
	$$
	I_{\infty} \leq I\left((Q[u_{k}])_{t_{k}^{\ast}}\right) \leq I\left((u_{k})_{t_{k}^{\ast}}\right)\leq \max_{t>0}I\left((u_{k})_{t}\right) = I(u_{k}).
	$$
The last inequality yields   
	$$
	I\left((Q[u_{k}])_{t_{k}^{\ast}}\right) \rightarrow I_{\infty}.
	$$
From this, without loss of generality we can assume that $(u_{k}) \subset X^r$.
	As ${X}$ is reflexive and $X^r$ is weak closed, we can suppose that for some subsequence, $(u_{k})$ is weakly convergent for some $u \in {X}^r$. Since,  $u_{k}\in \mathcal{P}$,  we must have $\Phi(u_{k})>0,$ and so, by $(F_{5})$, 
	$$
	\Phi(u)\geq 0.
	$$
We claim that $\Phi(u)>0$. Indeed, assume by contradiction that $\Phi(u)=0$. Then, 
	$$
	\Phi(u_{k})\rightarrow 0 \quad \mbox{as} \quad k \to +\infty.
	$$
Using the fact that $u_{k} \in \mathcal{P}$, we derive 
	$$
	J(u_{k}) \rightarrow 0 \quad \mbox{as} \quad k \to +\infty.
	$$
Now, applying $(F_{4})$ we get   
	$$
	||u_{k}|| \rightarrow 0,
	$$
	which contradicts Lemma \ref{lm3}. Thereby,  $\Phi(u)>0$, and so, $u \neq 0$. Consequently, the Lemma \ref{pr2} guarantees the existence of $t^{\ast}>0$ verifying 
	$$
	u_{t^{\ast}} \in \mathcal{P}.
	$$
From $(X_1)-(X_2)$,  
	$$
	\begin{array}{ll}
	I(u_{k}) &= \max_{t>0}I\left((u_{k})_{t}\right)\\
	&\geq I\left((u_{k})_{t^{\ast}}\right)\\
	& =\sum_{i=1}^{n}\psi_{i}((u_{k})_{t^{\ast}})- \Phi((u_{k})_{t^{\ast}}) \\
	& = \sum_{i=1}^{n}(t^{\ast})^{\lambda_{i}}\psi_{i}(u_{k})- (t^{\ast})^{\lambda_{\Phi}}\Phi(u_{k}).
	\end{array}
	$$
	The last inequality combines with $(F_5)-(F_{6})$ to give 
	$$
	\begin{array}{ll}
	I_{\infty} &\geq \liminf_{k \rightarrow \infty}\left(\sum_{i=1}^{n}(t^{\ast})^{\lambda_{i}}\psi_{i}(u_{k})- (t^{\ast})^{\lambda_{\Phi}}\Phi(u_{k})\right) \\
	& \geq \sum_{i=1}^{n}(t^{\ast})^{\lambda_{i}}\psi_{i}(u)- (t^{\ast})^{\lambda_{\Phi}}\Phi(u) \\
	& = I(u_{t^{\ast}}).
	\end{array}
	$$
Recalling that $u_{t^{\ast}} \in \mathcal{P}$, we deduce that  
	$$
	I_{\infty} = I(u_{t^{\ast}}).
	$$
Now, we are going to show that $I_\infty=\displaystyle \inf_{u \in \mathcal{P}}I(u)>0.$ In fact, by (\ref{eq7}), $\displaystyle I_\infty\geq0$. If $I_\infty = 0$, we can argue as above to find a minimizing sequence $(u_{k}) \subset \mathcal{P}^+$ satisfying $\Phi(u_k) \geq 0$ and $J(u_{k}) \rightarrow 0$. However, this information together with $(F_{4})$ leads to $||u_{k}|| \rightarrow 0$, contradicting Lemma \ref{lm3}. Therefore, $\displaystyle \inf_{u \in \mathcal{P}}I(u)>0,$ finishing the proof. 
\end{proof}

\section{Proof of Theorem \ref{T1} }
In this section our main goal is proving the Theorem \ref{T1}, however to do that, we need to prove more some preliminary lemmas.

\begin{lemma}\label{421}
	Let $u \in \mathcal{P}$ and set $\gamma(t) := u_{t}$. Then, 
	$$
	\lim\limits_{t \rightarrow +\infty} I(\gamma(t))=-\infty.
	$$
\end{lemma}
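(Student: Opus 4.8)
The plan is to compute $I(\gamma(t))$ explicitly via the homogeneity hypotheses and then read off its behavior as $t \to +\infty$. First I would apply $(X_1)$ and $(X_2)$ to the definition $\gamma(t) = u_{t} = \ast(t,u)$, which gives
$$
I(\gamma(t)) = \sum_{i=1}^{n}\psi_{i}(u_{t}) - \Phi(u_{t}) = \sum_{i=1}^{n} t^{\lambda_{i}}\psi_{i}(u) - t^{\lambda_{\Phi}}\Phi(u),
$$
which is exactly the function denoted $h(t)$ in the proof of Lemma \ref{pr2}. Thus the statement reduces to the assertion $\lim_{t\to+\infty} h(t) = -\infty$, and the whole point is to control the competition between the $\psi_{i}$-terms and the $\Phi$-term.

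The key preliminary step --- and essentially the only place where an argument is needed --- is to establish that $\Phi(u)>0$. Since $u \in \mathcal{P}$ we have $u \neq 0$, so $(F_{2})$ yields $J(u) = \sum_{i=1}^{n}\psi_{i}(u) > 0$; in particular at least one $\psi_{i}(u)$ is strictly positive while, again by $(F_{2})$, all of them are nonnegative. Because every $\lambda_{i}$ is strictly positive by $(X_3)$, it follows that $\sum_{i=1}^{n}\lambda_{i}\psi_{i}(u) > 0$. The defining relation of $\mathcal{P}$ then gives $\lambda_{\Phi}\Phi(u) = \sum_{i=1}^{n}\lambda_{i}\psi_{i}(u) > 0$, and since $\lambda_{\Phi}>0$ we conclude $\Phi(u)>0$.

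With positivity of $\Phi(u)$ in hand, I would factor out the highest power $t^{\lambda_{\Phi}}$:
$$
I(\gamma(t)) = t^{\lambda_{\Phi}}\left(\sum_{i=1}^{n} t^{\lambda_{i} - \lambda_{\Phi}}\psi_{i}(u) - \Phi(u)\right).
$$
By $(X_3)$ each exponent $\lambda_{i} - \lambda_{\Phi}$ is strictly negative, so $t^{\lambda_{i}-\lambda_{\Phi}}\psi_{i}(u) \to 0$ as $t \to +\infty$ and the bracketed term converges to $-\Phi(u) < 0$; meanwhile $t^{\lambda_{\Phi}} \to +\infty$ since $\lambda_{\Phi}>0$. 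Therefore the product tends to $-\infty$, which is the claim. I do not expect any genuine obstacle here: the lemma is a direct consequence of the homogeneity encoded in $(X_1)$--$(X_3)$ together with the positivity of $\Phi(u)$ forced by membership in $\mathcal{P}$ and hypothesis $(F_{2})$.
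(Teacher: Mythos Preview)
Your proof is correct and follows essentially the same route as the paper: compute $I(\gamma(t))$ via the homogeneity relations $(X_1)$--$(X_2)$, observe that $u\in\mathcal{P}$ forces $\Phi(u)>0$, and then let $(X_3)$ drive the expression to $-\infty$. The paper's version simply asserts $\Phi(u)>0$ without justification, whereas you spell out the chain $u\neq 0 \Rightarrow J(u)>0 \Rightarrow \sum_i \lambda_i\psi_i(u)>0 \Rightarrow \Phi(u)>0$; this added detail is welcome and is exactly what the paper relies on implicitly (cf.\ the same implicit use in Proposition~\ref{thm15}).
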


\begin{proof}
First of all, note that 
	\begin{equation}\label{eq12}
		I(\gamma(t))=\sum_{i=1}^{n}t^{\lambda^{i}}\psi_{i}(u)-t^{\lambda_{\Phi}}\Phi(u), \quad \forall t \in [0,+\infty). 
	\end{equation}
As $u \in \mathcal{P}$, we have $\Phi(u)>0.$ This combined with $(X_3)$  gives the desired result. 
\end{proof}
\begin{lemma}\label{422}
Let $\gamma:\mathbb{R} \rightarrow X$ be a continuous path satisfying 
$$
\gamma(0)=0 \quad \mbox{and} \quad \lim\limits_{t \rightarrow +\infty}I(\gamma(t))= - \infty.
$$
Then, there exists $t_{0}>0$ such that $\gamma(t_{0}) \in \mathcal{P}$.
\end{lemma}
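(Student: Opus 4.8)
The plan is to reduce the statement to an application of the intermediate value theorem to the scalar function $\phi(t):=K(\gamma(t))$, where $K$ is the Pohozaev operator. Since $\psi_1,\dots,\psi_n,\Phi$ are continuous and $\gamma$ is continuous, $\phi$ is continuous on $[0,+\infty)$, and by definition $\gamma(t_0)\in\mathcal{P}$ precisely when $\phi(t_0)=0$ and $\gamma(t_0)\neq 0$. So the whole proof amounts to locating a zero of $\phi$ at which $\gamma$ does not vanish.

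First I would control $\phi$ for large $t$. The key algebraic observation is the identity
$$
K(u)-\lambda_{\Phi}I(u)=\sum_{i=1}^{n}(\lambda_{i}-\lambda_{\Phi})\psi_{i}(u),
$$
valid for every $u\in X$. By $(X_3)$ we have $\lambda_i<\lambda_\Phi$, and by $(F_2)$ we have $\psi_i(u)\geq 0$, so the right-hand side is $\leq 0$; hence $K(u)\leq \lambda_{\Phi}I(u)$ for all $u$. Applying this along $\gamma$ and using the hypothesis $I(\gamma(t))\to-\infty$ together with $\lambda_\Phi>0$, I conclude that $\phi(t)\to-\infty$. In particular there is $T>0$ with $\phi(T)<0$; since $K(0)=0$ (because $\Phi(0)=0$ by $(F_1)$ and $\psi_i(0)=0$ by $(F_2)$), this forces $\gamma(T)\neq 0$.

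The delicate point, and the main obstacle, is to produce a zero of $\phi$ that is \emph{nontrivial}, i.e.\ where $\gamma\neq 0$: naive use of the IVT could only return a point with $\gamma(t_0)=0$, where $\phi$ vanishes automatically. To avoid this I would isolate the path from the origin after its last visit there. Consider $A:=\{t\in[0,T]:\gamma(t)=0\}$, which is closed (preimage of $\{0\}$ under $\gamma$) and nonempty (it contains $0$); set $t^{\ast}:=\sup A$. Since $A$ is closed, $t^{\ast}\in A$, and since $\gamma(T)\neq 0$ we have $t^{\ast}<T$; moreover $\gamma(t)\neq 0$ for every $t\in(t^{\ast},T]$. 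Near $t^{\ast}$ the path is small: by continuity $\gamma(t)\to\gamma(t^{\ast})=0$ as $t\to t^{\ast+}$, so there is $\delta>0$ with $0<\|\gamma(t)\|<r$ for $t\in(t^{\ast},t^{\ast}+\delta)$, where $r$ is the radius from $(F_3)$. For such $t$, condition $(F_3)$ gives exactly $K(\gamma(t))>0$, i.e.\ $\phi(t)>0$.

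Finally I would fix some $t_1\in(t^{\ast},t^{\ast}+\delta)$ with $\phi(t_1)>0$ and apply the intermediate value theorem to the continuous function $\phi$ on $[t_1,T]$, where $\phi(t_1)>0$ and $\phi(T)<0$. This yields $t_0\in(t_1,T)\subset(t^{\ast},T]$ with $\phi(t_0)=K(\gamma(t_0))=0$, and by the choice of $t^{\ast}$ we have $\gamma(t_0)\neq 0$. Therefore $\gamma(t_0)\in K^{-1}(\{0\})\setminus\{0\}=\mathcal{P}$, which is the desired conclusion. The only subtlety to double-check in writing this out carefully is the sign control of $\phi$ near $t^\ast$ (handled by $(F_3)$) and the fact that the crossing occurs strictly after $t^\ast$, guaranteeing nontriviality of the resulting point.
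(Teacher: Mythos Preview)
Your proposal is correct and follows essentially the same approach as the paper: both arguments use the inequality $K(u)\leq\lambda_{\Phi}I(u)$ (from $(X_3)$ and $(F_2)$) to force $K(\gamma(t))<0$ for large $t$, use $(F_3)$ to get $K(\gamma(t))>0$ near the last zero of $\gamma$, and handle the nontriviality issue by passing to the supremum of the zero set of $\gamma$. The only cosmetic difference is that the paper first treats the case $\gamma(t)\neq 0$ for all $t>0$ and then reduces the general case to it via the shifted path $\beta(t)=\gamma(t+\tilde t)$, whereas you carry out the argument in a single pass on $[t^{\ast},T]$; the content is the same.
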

\begin{proof}
We begin by supposing that $\gamma(t)\neq 0$ for all $t>0$.	Since  
	$$
	K(u)=\sum_{i=1}^{n}\lambda_{i}\psi_{i}(u) - \lambda_{\Phi}\Phi(u), 
	$$
by $(F_{3})$, there exists $r>0$ such that 
	$$
	K(u)>0, \quad \mbox{for} \quad ||u||<r. 
	$$
As $\gamma(0)=0$ and  $\gamma$, $K$ are continuous functions, for $t$ small enough we must have 
	$$
	K(\gamma(t))>0.
	$$ 
On the other hand, the definition of $I$, $(X_3)$ and  $(f_{3})$ lead to
 	$$
	K(u)=\lambda_{\Phi} I(u)+ \sum_{i=1}^{n}(\lambda_{i}-\lambda_{\Phi})\psi_{i}(u)\leq \lambda_{\Phi}I(u), \quad \forall u \in X.
	$$
	Hence,  
	$$
	K(\gamma({t})) \leq \lambda_{\Phi}I(\gamma({t})), \quad \forall t \in [0,+\infty).
	$$
	Then,  $K(\gamma(t))<0$ for $t$ large enough. From this, there is $\overline{t}>0$ verifying  
	$$
	K(\gamma(\overline{t}))=0,
	$$
	implying that $\gamma(\overline{t}) \in \mathcal{P}$. For the general case, fix $\tilde{t}>0$ satisfying
	$$
	\tilde{t}= \sup\left\{t\in [0, +\infty); \gamma(t)=0\right\}.
	$$ 
	By continuity, $\gamma(\tilde{t})=0$. Setting  
	$$
	\beta(t)=\gamma(t+\tilde{t}) \quad \forall t \in [0,+\infty),
	$$
	we have that $\beta(0)=0$, $\lim\limits_{t \rightarrow \infty}I(\beta(t))= - \infty$ and $\beta(t)\neq 0$, for all $t>0$. By the above arguments, there exists $t_{0}>0$ such that 
	$$
	\beta(t_{0}) \in \mathcal{P},
	$$
showing that $\gamma(t_{0}+ \tilde{t})\in \mathcal{P}$. 
\end{proof}

Next, we recall the deformation lemma for locally Lipschitz functional found in Figueiredo and Pimenta \cite{giovany} that will be used in the proof of Theorem \ref{T1}.
\begin{theorem}(Deformation Lemma)
	Let $X$ be a Banach space and $I:X \to \mathbb{R}$ be a locally 	Lipschitz functional. Assume that there are $c \in \mathbb{R}, S \subset E$ and $\alpha, \delta, \epsilon_{0}>0$ satisfying 
	$$
	\beta(x):=\min \left\{||z||_{X^{\ast}}; z \in \partial I(x) \right\} \geq \alpha, \mbox{ for all } x \in I^{-1}([c-\epsilon_{0}, c+\epsilon_{0}])\cap S_{2 \delta}.
	$$
where $S_{2\delta}$ is a $2\delta$-neighborhood of $S$. Then, for each $0<\epsilon < \min \{\frac{\delta \alpha}{2}, \epsilon_{0}\}$ there exists a homeomorphism $\eta: X \rightarrow X$ satisfying 
	\begin{itemize}
		\item $\eta(u)=u$, se $u \notin I^{-1}([c-\epsilon_{0}, c+\epsilon_{0}]) \cap S_{2\delta} $;
		\item $\eta(I^{c+\epsilon} \cap S) \subset I^{c-\epsilon}$;
		\item $I(\eta(u)) \leq I(u)$ for all $u \in X$,
	\end{itemize}
where $I^{a}=\{u \in X;I(u)\leq a\}$ for all $a \in \mathbb{R}$.
\end{theorem}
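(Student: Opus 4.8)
The statement is the nonsmooth counterpart of the classical quantitative deformation lemma, and the plan is to reproduce the $C^1$ argument with Clarke's generalized gradient $\partial I$ playing the role of the Fr\'echet derivative and with a locally Lipschitz \emph{pseudo-gradient} field replacing $-I'/\|I'\|$. First I would recall the standing properties of the generalized gradient: for each $x$ the set $\partial I(x)\subset X^{\ast}$ is nonempty, convex and weak$^{\ast}$-compact, the multifunction $x\mapsto\partial I(x)$ is upper semicontinuous, and consequently $\beta(x)=\min\{\|z\|_{X^{\ast}}:z\in\partial I(x)\}$ is well defined and lower semicontinuous. In particular the set $\mathcal{A}=\{x\in X:\beta(x)>0\}$ is open and, by hypothesis, contains the slab $I^{-1}([c-\epsilon_{0},c+\epsilon_{0}])\cap S_{2\delta}$.

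Next I would build the pseudo-gradient field. Fix $x\in\mathcal{A}$. Since $0\notin\partial I(x)$ and $\partial I(x)$ is convex and weak$^{\ast}$-compact, a separation/minimax argument produces $w\in X$ with $\|w\|\le1$ and $\langle z,w\rangle\ge\beta(x)$ for every $z\in\partial I(x)$; by upper semicontinuity of $\partial I$ and lower semicontinuity of $\beta$, the same $w$ remains admissible on a neighborhood of $x$. A locally finite partition of unity subordinate to such neighborhoods then glues these local directions into a single locally Lipschitz field $V:\mathcal{A}\to X$ satisfying $\|V(x)\|\le1$ and $\langle z,V(x)\rangle\ge\beta(x)$ for all $z\in\partial I(x)$.

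I would then localize and flow. Choose a locally Lipschitz cutoff $\chi:X\to[0,1]$ equal to $1$ on a neighborhood of $I^{-1}([c-\epsilon,c+\epsilon])\cap S_{\delta}$ and vanishing outside $I^{-1}([c-\epsilon_{0},c+\epsilon_{0}])\cap S_{2\delta}$, set $W=-\chi V$ (extended by $0$ off $\mathcal{A}$), and note that $W$ is locally Lipschitz with $\|W\|\le1$. Integrating the Cauchy problem $\dot{\sigma}=W(\sigma)$, $\sigma(0,x)=x$, boundedness of $W$ yields a global flow whose time maps are homeomorphisms; I set $\eta=\sigma(T,\cdot)$ with $T=2\epsilon/\alpha$, so that $T<\delta$ because $\epsilon<\delta\alpha/2$. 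For the verification: where $\chi=0$ the field vanishes and $u$ is an equilibrium, so $\eta(u)=u$, giving the first property. Along any orbit $t\mapsto I(\sigma(t,x))$ is Lipschitz, and Clarke's chain rule gives, for a.e.\ $t$, $\frac{d}{dt}I(\sigma)\le\max_{z\in\partial I(\sigma)}\langle z,W\rangle=-\chi(\sigma)\min_{z}\langle z,V(\sigma)\rangle\le-\chi(\sigma)\beta(\sigma)\le0$, which is the third property. For the descent property, take $u\in I^{c+\epsilon}\cap S$: the displacement is bounded by $\int_{0}^{T}\|W\|\le T<\delta$, so the orbit stays inside $S_{2\delta}$; if it never entered $I^{c-\epsilon}$ it would remain in $I^{-1}([c-\epsilon,c+\epsilon])$ where $\chi=1$ and $\beta\ge\alpha$, forcing $\frac{d}{dt}I\le-\alpha$ and hence a total drop of at least $\alpha T=2\epsilon$, contradicting $I(u)\le c+\epsilon$; therefore $I(\eta(u))\le c-\epsilon$.

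The hard part is the pseudo-gradient construction in the nonsmooth setting: obtaining, at each point, a single direction $w$ that is simultaneously $\beta$-good against the \emph{entire} convex weak$^{\ast}$-compact set $\partial I(x)$ (this is where the separation/minimax step enters, and it is cleanest under reflexivity), and then patching these local choices via a partition of unity while preserving both local Lipschitz continuity and the uniform lower bound $\langle z,V\rangle\ge\beta$. The only other delicate point is the measure-theoretic justification for differentiating $I$ along the flow, which is handled by the Lebourg mean value theorem together with Clarke's chain rule for the composition of a locally Lipschitz functional with an absolutely continuous curve.
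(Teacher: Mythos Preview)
The paper does not actually prove this Deformation Lemma: it is merely \emph{recalled} from Figueiredo and Pimenta \cite{giovany} (see the sentence introducing the theorem) and then used as a black box in the proof of Theorem~\ref{T1}. So there is no ``paper's own proof'' to compare against.

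That said, your sketch is the correct standard route to such a result and tracks the classical $C^{1}$ quantitative deformation lemma with Clarke's calculus substituted in. The outline---separation against the convex weak$^{\ast}$-compact set $\partial I(x)$ to get a local direction, partition-of-unity patching to a locally Lipschitz pseudo-gradient $V$, localization by a cutoff $\chi$, global flow of $W=-\chi V$, and the drop estimate $\alpha T=2\epsilon$---is exactly how the cited reference (and, before it, Chang \cite{Chang}) proceeds. Two small points worth tightening if you write this out in full: (i) make explicit that the orbit starting at $u\in S$ stays in $S_{\delta}$ (not just $S_{2\delta}$), so that your choice of $\chi\equiv 1$ on $I^{-1}([c-\epsilon,c+\epsilon])\cap S_{\delta}$ really covers the whole trajectory while $I$ remains in $[c-\epsilon,c+\epsilon]$; and (ii) the separation step producing a single $w$ with $\langle z,w\rangle\ge\beta(x)$ for \emph{all} $z\in\partial I(x)$ does not quite follow from Hahn--Banach alone---one typically invokes a minimax argument (or works with $\langle z,w\rangle\ge\tfrac{1}{2}\beta(x)$, which suffices and is easier to obtain), so be precise about which version you use.
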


Now, we are ready to proof Theorem \ref{T1}.

\begin{proof}({\bf Proof of Theorem \ref{T1} }) 
	By Proposition \ref{thm15}, there is $u \in \mathcal{P}$ with 
	$$
	I(u)=I_\infty= \inf_{w \in \mathcal{P}}I(w)>0.
	$$ 
	Assume by contradiction that $0 \notin \partial I(u)$. Then, there is $\alpha>0$ such that 	
	\begin{equation} \label{ob424} 
			|x-u|< \alpha \Rightarrow	\beta(x)=\min \left\{||z||_{X^{\ast}}; z \in \partial I(x) \right\}> \alpha. 
	\end{equation}
	Indeed, otherwise for each $\alpha=\frac{1}{k}$, it would exist $x_{k} \in X$ with 
		$$
		|u-x_{k}|< \frac{1}{k} \quad \mbox{ and } \quad \beta(x_{k})< \frac{1}{k}, \quad \forall k \in \mathbb{N}.
		$$
		Consequently, it would exist $z_{k} \in \partial I(x_k)$ with
		$$
		||z_{k}||_{X^{\ast}}< \frac{1}{k}.
		$$
		By definition and properties of $I^{0}(u,v)$ (see \cite{Chang}), we must have 
		$$
		I^{0}(u,v) \geq \limsup_{k \rightarrow \infty}I^{0}(x_{k},v) \geq \limsup_{k \rightarrow \infty} \langle z_{k},v \rangle  = \langle 0,v \rangle , \quad \forall v \in X,
		$$
		showing that  $0 \in \partial I(u)$, which is absurd. This proves (\ref{ob424}).  
		
		Applying the Deformation Lemma  for $c=I_\infty, \delta=\frac{\alpha}{4}$, $\epsilon_{0} = \frac{I_\infty}{2}$ and $S=B_{\frac{\alpha}{2}}(u)$, we get a homeomorphism $\eta:X \to X$ satisfying 
	\begin{itemize}
		\item $(i)$ \,\,  $\eta(u)=u$, if $u \notin I^{-1}([c-\epsilon_{0}, c+\epsilon_{0}]) \cap S_{2\delta} $;
		\item $(ii)$ \,\, $\eta(I^{c+\epsilon} \cap S) \subset I^{c-\epsilon}$;
		\item $(iii)$ \,\,  $I(\eta(u)) \leq I(u)$ for all $u \in X$.
	\end{itemize}
Fix
	$$
	\beta(t):=\eta(\gamma(t))
	$$
where $\gamma(t)=u_{t}$. By $(X_5)$, the function $\beta$ is continuous. Moreover, from $(X_4)$ and $I(0)<c- \epsilon_{0},$ we obtain
	$$
	\beta(0)=\eta(\gamma(0)) = \eta(0)=0.
	$$
Now, by Lemma \ref{421} and $(iii)$,
	\begin{equation}\label{eq46}
		\lim\limits_{t \rightarrow +\infty}I(\beta(t)) =\lim\limits_{t \rightarrow +\infty}I(\eta(\gamma(t)))\leq \lim\limits_{t \rightarrow +\infty} I(\gamma(t)) = - \infty.
	\end{equation}
The above analysis permit us to apply Lemma \ref{422} to find $t^{\ast}>0$ such that $\beta(t^{\ast}) \in \mathcal{P}$. Hence,
	\begin{equation}\label{eq47}
		c \leq I(\beta(t^{\ast})) \leq \max_{t>0}I(\beta(t)).
	\end{equation}
On the hand, as $I(u)=c<c+\epsilon$, $\gamma(1)=u$, $I$ and $\gamma$ are continuous, we can choose $\tau>0$ of such way that 
$$
\gamma(t)\in I^{c+\epsilon}\cap S, \quad \forall  t\in [1-\tau,1+\tau].
$$
Thereby, if $t\in [1-\tau,1+\tau]$, the Deformation Lemma yields 
$$
	I(\beta(t)) = I(\eta(\gamma(t)))\leq c-\epsilon.
$$
Now, we will analyze the case $t\notin [1-\tau,1+\tau]$. In this case, by Lemma  \ref{pr2} and $(iii)$,  
	$$
	I(\beta(t)) = I(\eta(\gamma(t)))\leq I(\gamma(t))<\max_{t>0}I(\gamma(t)) = I(\gamma(1))=c
	$$
In any case, we deduce that 
$$
		\max_{t>0} I(\beta(t)) <c,
$$ 
contradicting (\ref{eq47}). Therefore, 	
$$
	0 \in \partial I(u),
$$
finishing the proof. 
\end{proof}

The Corollary below is a version of Theorem \ref{T1} when the functional $I$ is $C^{1}(X, \mathbb{R})$.

\begin{corollary}\label{cor426}
Let $X,\psi_{1}$,..., $\psi_{n}$ and $\Phi$  satisfying $(X_{1})-(X_8)$ and $(F_1)-(F_6)$. Assuming that $\Phi$, $\psi_{1}$,..., $\psi_{n} \in C^{1}(X, \mathbb{R})$ and 
$$
\inf_{w \in \mathcal{P}}I(w)=\inf_{w \in \mathcal{P^+}}I(w).
$$
Then there exists $u \in \mathcal{P}$ such that 
$$
I(u)= \inf_{w \in \mathcal{P}}I(w).
$$
Moreover, $u$ is a critical point of $I$ in $X$, that is, $I'(u)=0.$
\end{corollary}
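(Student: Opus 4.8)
The plan is to read this off from Theorem \ref{T1} almost for free, the only genuinely new ingredient being the passage from the inclusion $0 \in \partial I(u)$ to the equation $I'(u)=0$ in the smooth setting.

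First I would dispose of the existence statement. Proposition \ref{thm15} already produces, under the hypotheses $(X_1)$--$(X_8)$, $(F_1)$--$(F_6)$ and the identity $\inf_{w \in \mathcal{P}}I(w)=\inf_{w \in \mathcal{P}^+}I(w)$, a point $u \in \mathcal{P}$ with $I(u)=\inf_{w \in \mathcal{P}}I(w)>0$; the $C^1$ regularity of the $\psi_i$ and of $\Phi$ plays no role in that argument. Thus the first conclusion of the corollary is immediate, and I would simply invoke Proposition \ref{thm15}.

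For the critical point assertion, I would first check that $I$ is locally Lipschitz so that Theorem \ref{T1} applies. Since each $\psi_i,\Phi \in C^1(X,\mathbb{R})$, the functional $I=\sum_{i=1}^{n}\psi_i-\Phi$ is continuously Fr\'echet differentiable; on any closed convex ball about a given point the quantity $\|I'(\cdot)\|_{X^{\ast}}$ is bounded by continuity of $I'$, and the mean value inequality then yields a Lipschitz bound for $I$ on that ball. With local Lipschitzness in hand, Theorem \ref{T1} gives $0 \in \partial I(u)$, where $\partial I$ denotes the Clarke generalized gradient.

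The last step is the only one where the smoothness hypothesis is truly used, and it is where I expect whatever small subtlety there is to lie: one must recall that for a functional which is continuously Fr\'echet differentiable---hence strictly differentiable at every point---the Clarke generalized gradient collapses to the singleton $\partial I(u)=\{I'(u)\}$ (see \cite{Chang}). Consequently $0 \in \partial I(u)$ is equivalent to $I'(u)=0$, which is the desired conclusion. No new analytic difficulty arises, since the substantive work---the minimization on the Pohozaev set and the deformation argument---is already contained in Proposition \ref{thm15} and Theorem \ref{T1}.
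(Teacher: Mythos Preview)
Your proposal is correct and follows essentially the same approach as the paper: the paper simply observes that for $I\in C^{1}(X,\mathbb{R})$ one has $\partial I(u)=\{I'(u)\}$, so the corollary is an immediate consequence of Theorem~\ref{T1}. Your additional remarks (verifying local Lipschitzness and separately citing Proposition~\ref{thm15} for the existence part) are harmless elaborations of the same idea.
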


\begin{proof}
Since $I \in C^{1}(X, \mathbb{R})$, we have that 
	$$
	\partial I (u) = \left\{I'(u)\right\}, \quad \forall u \in X. 
	$$
Therefore, o corollary is an immediate consequence of the Theorem \ref{T1}. 
\end{proof}

\section{Problem involving $s$ and $t$ fractional laplacian for $0<t,s<1$} 
As mentioned in the introduction, in this section we intend to prove the existence of nonnegative solution for a problem like
$$
\sum_{j=1}^{n}(- \Delta)^{s_j}u=g(u), \quad \mbox{in} \quad \mathbb{R}^N, \eqno{(P_1)}
$$
where $0<s_1\leq s_2\leq  .... \leq s_n<1, N > 2s_n$ and $(- \Delta)^{s_i}$ denotes the $s_i$-fractional laplacian and 
$$
g(s)=f(s)-s, \quad \forall s \in \mathbb{R}, 
$$ 
with $f:\mathbb{R} \rightarrow \mathbb{R}$ being a continuous function satisfying: \\

\noindent \,\,  $(f_1)$ \,\, $\displaystyle \lim_{s \rightarrow 0} \frac{f(s)}{s}=0$. \\

\noindent \,\, $(f_2)$ \,\, $\displaystyle \limsup_{s \rightarrow +\infty} \frac{|f(s)|}{|s|^{q-1}}< \infty$, for some $q \in (1, 2^{\ast}_{s_{n}}-1)$ where 
$ 2^{\ast}_{s_{n}}= \frac{2N}{N-2s_{n}}.$ \\

\noindent \,\,  $(f_3)$ \,\, $f(s)>0,$ \quad $\forall s>0$. \\

\noindent \,\, $(f_4)$ \,\, There is $\tau>0$ such that $G(\tau)=\int_{0}^{\tau}g(s)ds>0$. \\

Since we intend to find a nonnegative solution, in what follows we assume that 
$$
f(s)=0, \quad \forall s<0, 
$$
and denote by $F$ its primitive, that is, 
$$
F(s)= \int_{0}^{s}f(t)dt.
$$

The energy functional associated with $(P_1)$ is given by $I_1:H^{s_{n}}(\mathbb{R}^{N})\rightarrow \mathbb{R}$ with
$$
I_1(u)=\sum_{i=1}^{n}\frac{1}{2}\int_{\mathbb{R}^{N}}\int_{\mathbb{R}^{N}} \frac{|u(x)-u(y)|^{2}}{|x-y|^{N+2s_{i}}}dx\,dy-\int_{\mathbb{R}^{N}}G(u)dx.
$$
It is easy to see that $I_1 \in C^{1}(H^{s_n}(\mathbb{R}^N),\mathbb{R})$ and its critical points are weak solutions of $(P_1)$. 

We recall that, for any $s \in (0,1)$, the fractional Sobolev space $H^{s}(\mathbb{R}^N)$ is defined by
\[
H^{s}(\mathbb{R}^N)=\Big\{u\in L^2(\mathbb{R}^N): \ \int_{\mathbb{R}^{N}}\int_{\mathbb{R}^{N}}\frac{|u(x)-u(y)|^2}{|x-y|^{N+2s}}dx\,dy<\infty\Big\},
\]
endowed with the norm
$$
\|u\|=\Big(|u|_{L^2(\mathbb{R}^{N})}^2+\int_{\mathbb{R}^{N}}\int_{\mathbb{R}^{N}}\frac{|u(x)-u(y)|^2}{|x-y|^{N+2s}}dx\,dy\Big)^{1/2}.
$$
The fractional  Laplacian, $(-\Delta)^{s}u,$  of a smooth function $u:\mathbb{R}^{N} \rightarrow  \mathbb{R}$ is defined  by  
$$
{\mathcal F}((-\Delta)^{s}u)(\xi)=|\xi|^{2s}{\mathcal F}(u)(\xi), \ \xi \in \mathbb{R}^N,
$$ 
where ${\mathcal F}$ denotes the Fourier transform, that is, 
\[
{\mathcal F}(\phi)(\xi)=\frac{1}{(2\pi)^{\frac{N}{2}}} \int_{\mathbb{R}^N} \mathit{e}^{-i \xi \cdot x} \phi (x)  \, 
\ d x \equiv \widehat{\phi}(\xi) ,  
\]
for functions $\phi$ in  the  Schwartz class. As mentioned in \cite[Lemma 3.2]{nezza}, $(-\Delta)^{s}u$  can be equivalently represented  by
$$
(-\Delta)^{s} u(x) = -\frac{1}{2} C(N,s)\int_{\mathbb{R}^N}\frac{(u(x+y)+u(x-y)-2 u(x))}{|y|^{N+2s}}\ d y, \ \forall x \in \mathbb{R}^N,
$$
where 
$$C(N,s)=(\int_{\mathbb{R}^N}\frac{(1- cos\xi_1)}{|\xi|^{N+2s}}d\xi)^{-1},\  \xi=(\xi_1,\xi_2,\ldots,\xi_N).$$
Also, in light of \cite[Propostion~3.4,Propostion~3.6]{nezza}, we have
\begin{equation}
\label{equinorm}
|(-\Delta)^{s/2} u|^2_{L^2(\mathbb{R}^N)}=\int_{\mathbb{R}^N}|\xi|^{2s}|\widehat{u}|^2d\xi=\frac{1}{2}C(N,s)
\int_{\mathbb{R}^{N}}\int_{\mathbb{R}^{N}}\frac{|u(x)-u(y)|^2}{|x-y|^{N+2s}}dx\,dy,
\end{equation}
for all $u\in H^{s}(\mathbb{R}^N)$, and sometimes, we identify these two quantities by omitting the normalization constant $\frac{1}{2} C(N,s).$
For $ N > 2s,$  from \cite[Theorem 6.5]{nezza} we also know that, for any $p \in [ 2, 2^{*}_{s}]$,
there exists $C_p>0$ such that
\begin{equation}
\label{emb}
|u|_{L^p(\mathbb{R}^{N})}\leq C_p\|u\|,
\,\quad \mbox{for all $u\in H^{s}(\mathbb{R}^N)$}.
\end{equation}

In the sequel, we will work to show that functional $I_1$ verifies the assumptions of Theorem \ref{T1}. To this end, we need to fix some notations: 

$$
\psi_i(u)=\frac{1}{2}\int_{\mathbb{R}^{N}}\int_{\mathbb{R}^{N}} \frac{|u(x)-u(y)|^{2}}{|x-y|^{N+2s_{i}}}dx\,dy \quad \mbox{for} \quad i\in\{1,...,n\}, 
$$
$$
\Phi(u)=\int_{\mathbb{R}^{N}}G(u)dx=\int_{\mathbb{R}^{N}}F(u)dx-\frac{1}{2}\int_{\mathbb{R}^{N}}|u|^{2}dx,
$$
$$
X=H^{s_n}(\mathbb{R}^N), \quad X^{+}=\{u \in H^{s_n}(\mathbb{R}^N)\,:\, u(x) \geq 0 \quad \mbox{a.e. in} \quad \mathbb{R}^N\}
$$
and
$$
X^r=\{u \in H_{rad}^{s_n}(\mathbb{R}^N) \cap X^+\,:\, 0 \leq u(x) \leq u(y) \quad \mbox{if} \quad 0<|y| \leq |x| \}.
$$

The reader is invited to observe that $X^+$ and $X^r$ are weak closed in $H^{s_n}(\mathbb{R}^N)$. Moreover, we would like point out that $X^r$ is compactly embedding $L^{q}(\mathbb{R}^N)$ for all $q \in (2,2^{*}_{s_n}) $, that is, if $(u_k) \subset X^r$ is a bounded sequence in $H^{s_n}(\mathbb{R}^N)$, then there are a subsequence of $(u_k)$, still denoted by itself, and $u \in X^r$ such that 
$$
u_k \to u \quad \mbox{in} \quad L^{q}(\mathbb{R}^N), \quad \forall q \in (2,2^{*}_{s_n}).
$$
The proof this fact follows of \cite[Radial Lemma A.IV ]{berest}.

The main result this section has the following statement
\begin{theorem} Assume the conditions $(f_1)-(f_4)$. Then, $(P_1)$ has a nontrivial solution. 
\end{theorem}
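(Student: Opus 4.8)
The plan is to realize $I_1 = J - \Phi$ as the abstract functional of Theorem \ref{T1}, verify the structural hypotheses $(X_1)$--$(X_8)$ and $(F_1)$--$(F_6)$ together with the equality of infima, and then invoke the $C^1$ version, Corollary \ref{cor426}, to produce a nontrivial critical point of $I_1$, i.e. a weak solution of $(P_1)$. The natural choice for $\ast$ is the dilation $\ast(t,u)(x):=u(x/t)$ for $t>0$ and $\ast(0,u):=0$. A change of variables gives $\psi_i(\ast(t,u))=t^{N-2s_i}\psi_i(u)$ and $\Phi(\ast(t,u))=t^{N}\Phi(u)$, so $(X_1)$--$(X_2)$ hold with $\lambda_i=N-2s_i$ and $\lambda_\Phi=N$; since $N>2s_n\ge 2s_i$ we get $0<\max_i\lambda_i=N-2s_1<N=\lambda_\Phi$, which is $(X_3)$. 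Here $(X_4)$--$(X_5)$ follow because $\|\ast(t,u)\|^2=t^{N}|u|_{L^2}^2+t^{N-2s_n}[u]_{s_n}^2\to 0$ as $t\to0^+$ and $t\mapsto\ast(t,u)$ is continuous. For the rearrangement data I take $Q$ to be the symmetric decreasing (Schwarz) rearrangement: the fractional Pólya--Szeg\H{o} inequality yields $\psi_i(Q(u))\le\psi_i(u)$, that is $(X_6)$, while equimeasurability and $G(0)=0$ give $\Phi(Q(u))=\Phi(u)$, that is $(X_7)$; condition $(X_8)$ is immediate since dilations preserve radial monotonicity, and $X^+,X^r$ are weakly closed as already observed.

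Next I verify $(F_1)$--$(F_6)$. Condition $(F_1)$ follows from $(f_4)$ via the Berestycki--Lions test function (a plateau at height $\tau$ on a large ball), which makes $\Phi$ positive; $(F_2)$ is clear since each $\psi_i\ge0$ and $J(u)=0$ forces the $s_n$-seminorm to vanish, hence $u=0$ in $L^2$. From $(f_1)$--$(f_2)$ one obtains, for each $\epsilon>0$, a bound $|f(s)|\le\epsilon|s|+C_\epsilon|s|^{q}$ with $q+1\in(2,2^{\ast}_{s_n})$, whence $|F(s)|\le\frac{\epsilon}{2}s^2+C_\epsilon'|s|^{q+1}$. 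Inserting this into $K(u)=\sum_i(N-2s_i)\psi_i(u)-N\int_{\mathbb{R}^N}F(u)+\frac{N}{2}|u|_{L^2}^2$ and using \eqref{emb} gives $K(u)\ge c_0\|u\|^2-C\|u\|^{q+1}$ for small $\|u\|$, which is $(F_3)$. Finally $(F_6)$ is the weak lower semicontinuity of the convex continuous functionals $\psi_i$.

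The delicate conditions are $(F_4)$ and $(F_5)$. For $(F_4)$, when $\Phi(u_k)\ge0$ we have $\frac12|u_k|_{L^2}^2\le\int_{\mathbb{R}^N}F(u_k)\le\frac{\epsilon}{2}|u_k|_{L^2}^2+C_\epsilon|u_k|_{L^{q+1}}^{q+1}$; interpolating $|u_k|_{L^{q+1}}\le|u_k|_{L^2}^{1-\theta}|u_k|_{L^{2^{\ast}_{s_n}}}^{\theta}$ and using the fractional Sobolev inequality $|u_k|_{L^{2^{\ast}_{s_n}}}^2\le C\,\psi_n(u_k)$ converts this into $|u_k|_{L^2}^2\le C|u_k|_{L^2}^{(1-\theta)(q+1)}\,\psi_n(u_k)^{\theta(q+1)/2}$, where a short computation shows $(1-\theta)(q+1)<2$. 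Since $J(u_k)$ controls $\psi_n(u_k)$, this yields boundedness of $(u_k)$ when $J(u_k)$ is bounded, and $|u_k|_{L^2}\to0$, hence $\|u_k\|\to0$, when $J(u_k)\to0$. For $(F_5)$ I use that $X^r$ embeds compactly into $L^{q+1}(\mathbb{R}^N)$ (the radial lemma of \cite{berest}), so a weakly convergent sequence in $X^r$ converges strongly in $L^{q+1}$ and a.e.; writing $G(s)\le C|s|^{q+1}$ and applying the reverse Fatou lemma to $G(u_k)-C|u_k|^{q+1}\le0$, together with weak lower semicontinuity of $|\cdot|_{L^2}^2$, gives $\limsup_k\Phi(u_k)\le\Phi(u)$.

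It remains to check $\inf_{\mathcal P}I_1=\inf_{\mathcal P^+}I_1$. The inequality $\le$ is trivial since $\mathcal P^+\subset\mathcal P$. For the reverse, given $u\in\mathcal P$ I pass to $|u|\in X^+$: the pointwise bound $\bigl||u(x)|-|u(y)|\bigr|\le|u(x)-u(y)|$ gives $\psi_i(|u|)\le\psi_i(u)$, while the convention $f\equiv0$ on $(-\infty,0)$ gives $F(|u|)\ge F(u)$ pointwise, hence $\Phi(|u|)\ge\Phi(u)>0$. By Lemma \ref{pr2} there is $t^\ast>0$ with $(|u|)_{t^\ast}\in\mathcal P$, and since dilations preserve nonnegativity, $(|u|)_{t^\ast}\in\mathcal P^+$. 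Comparing fibering maps termwise gives $I_1((|u|)_t)\le I_1(u_t)$ for every $t>0$, so $I_1((|u|)_{t^\ast})=\max_{t>0}I_1((|u|)_t)\le\max_{t>0}I_1(u_t)=I_1(u)$, whence $\inf_{\mathcal P^+}I_1\le I_1(u)$; taking the infimum over $\mathcal P$ proves the claim. With all hypotheses verified and $I_1\in C^1$, Corollary \ref{cor426} furnishes $u\in\mathcal P$ with $I_1(u)=\inf_{\mathcal P}I_1>0$; being a critical point of $I_1$ it is a nontrivial weak solution of $(P_1)$, and the construction in Proposition \ref{thm15} places it in $X^r$, so it is in fact nonnegative. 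The main obstacle is precisely $(F_4)$--$(F_5)$: the coercivity estimate rests on the fractional Gagliardo--Nirenberg interpolation and the sharp exponent inequality $(1-\theta)(q+1)<2$, while the $\limsup$ inequality relies essentially on restricting to the radial class $X^r$, where the necessary compactness is available.
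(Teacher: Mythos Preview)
Your proposal is correct and follows essentially the same route as the paper: realize $I_1$ in the abstract framework with the dilation $\ast(t,u)=u(\cdot/t)$, take $\lambda_i=N-2s_i$, $\lambda_\Phi=N$, use Schwarz symmetrization for $Q$, and verify $(X_1)$--$(X_8)$, $(F_1)$--$(F_6)$ together with the equality of infima before invoking Corollary~\ref{cor426}. The only cosmetic differences are that the paper handles $(F_4)$ via the cruder bound $F(s)\le\tfrac14 s^2+C|s|^{2^{\ast}_{s_n}}$ (which gives $|u|_2^2\le C[J(u)]^{2^{\ast}_{s_n}/2}$ in one step) rather than your Gagliardo--Nirenberg interpolation, treats $(F_5)$ by proving $\int F(u_k)\to\int F(u)$ directly from the compact embedding instead of your reverse Fatou argument, and uses $u^{+}$ rather than $|u|$ in the equality-of-infima step; all three variants are equivalent here.
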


\begin{proof} In the sequel, we will show that all conditions of Theorem \ref{T1} hold, which ensures that functional $I_1$ has a nontrivial critical point, and hence $(P_1)$ has a nontrivial solution. First of all, note that
$$
I_1(u)=J(u)-\Phi(u), \quad \forall u \in X=H^{s_{n}}(\mathbb{R}^{N}). 
$$	
	
In what follows, we set $\ast(t,u):=u_{t}: \mathbb{R}^N \to \mathbb{R}$ by 
	$$
	u_{t}(x)=
	\left\{
	\begin{array}{l}
	u \left(\frac{x}{t}\right), \quad \mbox{ for } \quad t > 0, \\
	0, \quad \mbox{for} \quad t=0.
	\end{array}
	\right.
	$$	
It is easy to check that 
$$
\psi_{1}	,\psi_{2},..., \psi_{n},\Phi \in C^{1}(H^{s_{n}}(\mathbb{R}^{N}), \mathbb{R}),
$$
$$
u \in X^r \Rightarrow u_t \in X^r, \quad \forall t \geq 0,
$$
$$
\Phi(u_{t}) = t^{N}\Phi(u), \quad  \forall t\geq 0 \quad \mbox{and} \quad \forall u \in H^{s_{n}}(\mathbb{R}^{N})
$$
and
$$
\psi_{i}(u_{t})= t^{N-2s_{i}}\psi_{i}(u), \quad \forall t\geq 0, \quad \forall i \in \{1,2,...,n\}  \quad \mbox{and} \quad \forall u \in H^{s_{n}}(\mathbb{R}^{N}).
$$ 

Thus, the conditions $(X_1)-(X_4)$ and $(F_2)$ occur. Moreover, a simple computation shows that for each $u \in H^{s_{n}}(\mathbb{R}^{N})$, the application $t \longmapsto u_{t}$ is continuous, and so, $(X_5)$ is also proved.

The conditions $(X_6)-(X_8)$ are verified by considering 	
	$$
	\begin{array}{cccl}
	Q:& H^{s_{n}}(\mathbb{R}^{N})& \longrightarrow& X^r \\
	&u& \longmapsto &(u^{+})^{\ast}	
	\end{array}
	$$
	where $(u^{+})^{\ast}$ is the Schwartz's symmetrization of $u^{+}=\max\{u,0\}$. 

\vspace {0.5 cm}

Now, we are going to prove the conditions $(F_1)$ and $(F_3)-(F_6)$.

	\begin{claim}\label{4302}( {\bf Proof of $(F_1)$} ):  \, 
		There exists $u \in H^{s_{n}}(\mathbb{R}^{N})$ such that $\Phi(u)>0$ and $\Phi(0)=0$.
	\end{claim}
	\begin{proof}
		By definition of $\Phi$, we have $\Phi(0)=0$. For each $k \in \mathbb{N}$, take  $\phi_{k}\in C_{0}^{\infty}(B_{1+\frac{1}{k}}(0))$ with $\phi_{k} = \tau$ in $B_{1}(0)$ and  $|\phi_{k}| \leq \tau$. Note that 
		$$
		\int_{\mathbb{R}^{N}}G(\phi_k)dx = \int_{B_{1}(0)}G(\phi_k)dx+ \int_{B_{1+\frac{1}{k}}(0)\setminus B_{1}}G(\phi_k)dx
		$$
		and
		$$
		\left|\int_{B_{1+\frac{1}{k}}(0)\setminus B_{1}(0)}G(\phi_k)dx \right| \leq \left(\sup_{\left\{|t|\leq \tau\right\}}G(t)\right)|B_{1+\frac{1}{k}}\setminus B_{1}(0)|.
		$$
	As
		$$
		|B_{1+\frac{1}{k}}\setminus B_{1}(0)| \rightarrow 0 \quad \mbox{as} \quad k \to +\infty, 
		$$
	and  $G(\tau)>0$, we can fix $k \in \mathbb{N}$ large enough such that 
			$$
		\begin{array}{ll}
	\displaystyle 	\int_{\mathbb{R}^{N}}G(\phi_k)dx &= 	\displaystyle \int_{B_{1}(0)}G(\phi_k)dx+ \int_{B_{1+\frac{1}{k}}(0)\setminus B_{1}}G(\phi_k)dx\\
		& \geq G(\tau)|B_{1}(0)|-\left(\sup_{\left\{|t|\leq \tau\right\}}G(t)\right)|B_{1+\frac{1}{k}}\setminus B_{1}(0)|, \\
		&> 0,
		\end{array}
		$$
	proving the claim. 
	\end{proof}

	\begin{claim}\label{4303}( {\bf Proof of $(F_3)$})
		There exists  $r>0$ such that  
		$$
		\sum_{i=i}^{n}\lambda_{i}\psi_{i}(u)>\lambda_{\Phi}\Phi(u), \quad \mbox{for} \quad 0<||u||< r.
		$$ 
	\end{claim}
	\begin{proof}
	Given  $\epsilon< \frac{1}{2}$, we find $C_{\epsilon}>0$ verifying 
		$$
		\begin{array}{ll}
		\lambda_{n}\psi_{n}(u) - \lambda_{\Phi} \Phi(u) &= \lambda_{n}\psi_{n}(u) + \frac{\lambda_{\Phi}}{2}\displaystyle \int_{\mathbb{R}^{N}}|u|^{2}dx - \lambda_{\Phi}\int_{\mathbb{R}^{N}}F(u)dx\\
		&\geq \lambda_{n}\psi_{n}(u) + \lambda_{\Phi}(\frac{1}{2}- \epsilon)\displaystyle \int_{\mathbb{R}^{N}}|u|^{2}dx - \lambda_{\Phi} C_{\epsilon}\int_{\mathbb{R}^{N}}|u|^{q}dx \\
		& \geq C||u||^{2}-C_{1}||u||^{q},
		\end{array}
		$$
		where $C,C_1>0$ are positive constants. As $q>2$, we obtain the desired result. 
	\end{proof}
	
	\begin{claim}\label{4304}( {\bf Proof of $(F_4)$})
	Let $(u_{k})$ be a sequence in $H^{s_{n}}(\mathbb{R}^{N})$ with $\Phi(u_{k})\geq 0$ for all $k \in \mathbb{N}$. If $(J(u_{k}))$ is bounded,  we have that  $(u_{k})$ is also bounded. Moreover, if $J(u_{k})\rightarrow 0$, then $||u_{k}|| \rightarrow 0$. 
	\end{claim}
	\begin{proof}
		Fix $u \in H^{s_n}(\mathbb{R}^N)$ with $\Phi(u)\geq 0$. By assumptions on $f$, there exists $C_{\frac{1}{4}}>0$ satisfying  
		$$
		\int_{\mathbb{R}^{N}}F(u)dx \leq \frac{1}{4}\int_{\mathbb{R}^{N}}|u|^{2}dx + C_{\frac{1}{4}}\int_{\mathbb{R}^{N}}|u|^{2^{\ast}_{s_{n}}}dx.
		$$
		As $\Phi(u)\geq 0$, 
		$$
		\begin{array}{ll}
		\frac{1}{2}\displaystyle \int_{\mathbb{R}^{N}}|u|^{2}dx & \leq \Phi(u) + \frac{1}{2}\displaystyle\int_{\mathbb{R}^{N}}|u|^{2}dx \\
		& = \displaystyle\int_{\mathbb{R}^{N}}F(u)dx \\
		& \leq   \frac{1}{4}\displaystyle\int_{\mathbb{R}^{N}}|u|^{2}dx + C_{\frac{1}{4}}\displaystyle\int_{\mathbb{R}^{N}}|u|^{2^{\ast}_{s_n}}dx.
		\end{array}	
		$$
		From this, 
		$$
		\frac{1}{4}\int_{\mathbb{R}^{N}}|u|^{2}dx \leq C_{1}[\psi_{n}(u)]^{\frac{2^{\ast}_{s_{n}}}{2}} \leq C_{2}[J(u)]^{\frac{2^{\ast}_{s_{n}}}{2}},
		$$
	for some positive constants $C_1,C_{2}>0$. Therefore, 
		$$
		||u||^{2} \leq J(u) + C_{3}[J(u)]^{\frac{2^{\ast}_{s_{n}}}{2}},
		$$
	showing the Claim \ref{4304}.
	\end{proof}	
	\begin{claim}\label{a4305}( {\bf Proof of $(F_5)$})
		If $(u_{k})$ is weakly convergent for $u$ in $X^r$, then 
		$$
		\limsup_{k \rightarrow \infty}\Phi(u_{k}) \leq \Phi(u).
		$$
	\end{claim}
	
	\begin{proof}
	The assumptions on $f$ ensure that for each $\epsilon>0$, there is $C_{\epsilon}>0$ satisfying 
		$$
		F(s) \leq \frac{\epsilon}{6L} |s|^{2} + C_{\epsilon}|s|^{p}.
		$$	
	where $L=\sup_{k \in \mathbb{N}}||u_{k}||^{2}$. Then, for $R>0$
		$$
		\int_{B_{R}^{c}}|F(u_{k})|dx \leq  \frac{\epsilon}{6} +  C_{\epsilon}\int_{B_{R}^{c}}|u_{k}|^{p}dx.
		$$
		Since $X^r$ is compactly embedding in $L^{q}(\mathbb{R}^{N})$ for $q \in (2, 2^{\ast})$, there are $R$ and $k_0$ large enough such that 
		$$
		\int_{B_{R}^{c}}|F(u)|dx <\frac{\epsilon}{3} \mbox{ and } \int_{B_{R}^{c}}|F(u_k)|dx <\frac{\epsilon}{3}, \quad \forall k \geq k_0.
		$$
	Now, as $H^{s_{n}}(\mathbb{R}^{N})$ is compactly embedding in  $L^{q}(B_{R})$ for $p \in [1,2^{\ast}_{n})$ and $f$ has subcritical growth, we have 
		$$
		\int_{B_{R}}|F(u_{k})-F(u)|dx \rightarrow 0.
		$$
	From this, 
		$$
	\limsup_{k \rightarrow \infty}\int_{\mathbb{R}^{N}}|F(u_{k})-F(u)|dx \leq \epsilon, 
		$$
implying that 
		$$
		\int_{\mathbb{R}^{N}}F(u_{k})dx \rightarrow \int_{\mathbb{R}^{N}}F(u)dx.
		$$
The last limit leads to
		$$
		\begin{array}{ll}
		\liminf \left(\displaystyle \frac{1}{2} \int_{\mathbb{R}^{N}}|u_{k}|^{2}dx -\int_{\mathbb{R}^{N}}F(u_{k})dx\right) &\geq \liminf \left(\displaystyle \frac{1}{2} \int_{\mathbb{R}^{N}}|u_{k}|^{2}dx\right)-\displaystyle \int_{\mathbb{R}^{N}}F(u)dx\\
		& \geq \displaystyle \frac{1}{2}\int_{\mathbb{R}^{N}}|u|^{2}dx-\int_{\mathbb{R}^{N}}F(u)dx = -\Phi(u),
		\end{array}
		$$
	that is, $\displaystyle \liminf_{k \to +\infty} (-\Phi(u_{k})) \geq - \Phi(u)$, proving the result. 
	\end{proof}
	
	\begin{claim}\label{a4306}( {\bf Proof of $(F_6)$})
		Se $(u_{k})$ is weakly convergent to $u$ in $H^{s_{n}}(\mathbb{R}^{N})$, then   
		$$
		 \psi_{i}(u) \leq \liminf_{k \rightarrow \infty}\psi_{i}(u_{k}), \,\, \forall i \in \{1,2,...,n\}.
		 $$
	\end{claim}	
	\begin{proof}
	The result is an immediate consequence of the fact that $\psi_i$ is a convex function in $H^{s_{n}}(\mathbb{R}^{n})$ for $i \in \{1,2,...,n\}$. 
	\end{proof}
	\begin{claim} \label{igualdade} The functional $I_1$ verifies the equality below  
	$$
	\inf_{w \in \mathcal{P}}I_1(w)=\inf_{w \in \mathcal{P}^+}I_1(w).
	$$ 	
	\end{claim}
\begin{proof} By definition of $I_1$, it is easy to see that 
$$
I_1(u) \geq I_1(u^{+}), \quad \forall u \in H^{s_n}(\mathbb{R}^{N}),
$$	
where $u^{+}=\max\{u,0\}.$  For each $u \in \mathcal{P}$, we know that $u^{+} \not=0$, thus there is $t^{+}>0$ such that $(u^{+})_{t^{+}} \in \mathcal{P}$. Then, 
$$
\inf_{w \in \mathcal{P}^+}I_1(w) \leq I_1((u^{+})_{t^{+}}) \leq I_1((u)_{t^{+}}) \leq \max_{t>0}I_1(u_t)=I_1(u),\quad \forall u \in \mathcal{P},
$$
showing the desired result. 
\end{proof}

The above claims permit us to conclude that $I_1$ verifies the assumptions of Theorem \ref{T1}, more precisely Corollary \ref{cor426}. Hence, $I_1$ has a nontrivial critical point, and so, problem $(P_1)$ possesses a nontrivial solution.  
\end{proof}

Before concluding this section, we would like point out that the reader can find recent results involving fractional Laplacian in Barrios, Colorado, de Pablo and S\'anchez \cite{barrios}, Br\"andle, Colorado and  S\'anchez \cite{Brandle},  Cabr\'e and Sire \cite{cabre}, Caffarelli and Silvestre \cite{caffarelli}, Fall, Mahmoudi and  Valdinoci  \cite{Moustapha}, Felmer, Quass and Tan \cite{FQT},  Secchi \cite{Secchi} and their references.

\section{ Existence of solution for a class of anisotropic problem} 

In this section we study the existence of solution for the following anisotropic problem
$$ 
-\sum_{i=1}^{N}\frac{\partial}{\partial x_i}\left(\left|\frac{\partial u}{\partial x_i} \right|^{p_i-2}\frac{\partial u}{\partial x_i}\right)=g(u), \quad \mbox{in} \quad \mathbb{R}^N, \eqno{(P_2)}
$$
where $1< p_{1} \leq ... \leq p_{N}<N$ and $g:\mathbb{R} \to \mathbb{R}$ is a function given by 
$$
g(s)=f(s)-|s|^{p_1-2}s, \quad \forall s \in \mathbb{R}, 
$$  
with $f:\mathbb{R} \rightarrow \mathbb{R}$ being a continuous function satisfying $(f_3)-(f_4)$ and the conditions: \\

\noindent $(f_5)$ \,\,  $\displaystyle \lim_{s \rightarrow 0} \frac{f(s)}{|s|^{p_1-1}}=0$. \\
\noindent $(f_6)$ \,\, $\displaystyle \limsup_{s \rightarrow +\infty} \frac{|f(s)|}{|s|^{q-1}}< \infty$, for some $q \in (p_0, p^{\ast})$ where 
	$$
	p^{\ast}= \frac{N}{\sum_{i=1}^{N}\frac{1}{p_{i}}-1}.
	$$
As in the previous section, we will assume that 
$$
f(s)=0, \quad \forall s<0, 
$$
and denote by $F$ its primitive, that is, 
$$
F(s)= \int_{0}^{s}f(t)dt.
$$

The main theorem in this section is the following 
\begin{theorem} \label{ANST}
	Assume the conditions $(f_3)-(f_6)$. Then, $(P_2)$ has a nontrivial solution. 
\end{theorem}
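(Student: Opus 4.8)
The plan is to mirror Section 4 at the level of the abstract scheme, checking that the $C^1$ energy
$$
I_2(u)=\sum_{i=1}^{N}\frac{1}{p_i}\int_{\mathbb{R}^N}\Big|\frac{\partial u}{\partial x_i}\Big|^{p_i}dx+\frac{1}{p_1}\int_{\mathbb{R}^N}|u|^{p_1}dx-\int_{\mathbb{R}^N}F(u)dx
$$
satisfies $(X_1)$--$(X_8)$, $(F_1)$--$(F_6)$ and the equality of infima, and then applying Corollary \ref{cor426}. I would work in the reflexive anisotropic Sobolev space $X=W^{1,\vec p}(\mathbb{R}^N)$ with $\|u\|=|u|_{L^{p_1}}+\sum_{i=1}^N|\partial_{x_i}u|_{L^{p_i}}$, and set
$$
\psi_i(u)=\frac{1}{p_i}\int_{\mathbb{R}^N}\Big|\frac{\partial u}{\partial x_i}\Big|^{p_i}dx,\qquad \Phi(u)=\int_{\mathbb{R}^N}F(u)dx-\frac{1}{p_1}\int_{\mathbb{R}^N}|u|^{p_1}dx,
$$
with the dilation $\ast(t,u)=u_t$, $u_t(x)=u(x/t)$ for $t>0$ and $u_0=0$. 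A change of variables gives $\psi_i(u_t)=t^{N-p_i}\psi_i(u)$ and $\Phi(u_t)=t^N\Phi(u)$, so $\lambda_i=N-p_i$ and $\lambda_\Phi=N$; since $1<p_i<N$ we get $0<\max_i\lambda_i=N-p_1<N=\lambda_\Phi$, which is $(X_3)$. The axioms $(X_1)$, $(X_2)$, $(X_4)$, $(X_5)$, the continuity of $t\mapsto u_t$, and $(F_2)$ (note $J(u)=0$ forces every $\partial_{x_i}u=0$, hence $u=0$) follow exactly as in the fractional case, $(F_1)$ is produced from $(f_4)$ by the test-function construction of Claim \ref{4302}, and $(F_6)$ is the weak lower semicontinuity coming from convexity of each $\psi_i$, as in Claim \ref{a4306}.

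The analytic heart is $(F_3)$ and $(F_4)$, where the fractional Sobolev inequality is replaced by the anisotropic (Troisi) inequality $|u|_{L^{p^\ast}}\le C\prod_{i=1}^N|\partial_{x_i}u|_{L^{p_i}}^{1/N}$ and the growth bound $F(s)\le\frac{\epsilon}{p_1}|s|^{p_1}+C_\epsilon|s|^{q}$, $q\in(p_1,p^\ast)$, coming from $(f_5)$--$(f_6)$. For $(F_4)$, if $\Phi(u)\ge0$ then $\frac{1}{p_1}|u|_{p_1}^{p_1}\le\int F(u)$; absorbing the $|u|^{p_1}$ term and interpolating $|u|_q\le|u|_{p_1}^{1-\theta}|u|_{p^\ast}^{\theta}$ (with $\tfrac1q=\tfrac{1-\theta}{p_1}+\tfrac{\theta}{p^\ast}$) yields, after Troisi, $|u|_{p_1}\le C|u|_{p^\ast}^{p^\ast/p_1}$, so $J(u)$ bounded controls $\|u\|$ and $J(u)\to0$ forces $\|u\|\to0$; this is the analogue of Claim \ref{4304}. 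For $(F_3)$ I would estimate the same interpolated term against the positive part of $K$: writing $|u|_q^q\le C|u|_{p_1}^{(1-\theta)q}\prod_i|\partial_{x_i}u|_{p_i}^{\theta q/N}$, the total normalized degree equals $\tfrac{(1-\theta)q}{p_1}+\tfrac{\theta q}{p^\ast}+\tfrac{\theta q}{N}=1+\tfrac{\theta q}{N}>1$, so by a weighted Young inequality the negative term is higher order near the origin and $K(u)=\sum_i(N-p_i)\psi_i(u)+\tfrac{N(1-\epsilon)}{p_1}|u|_{p_1}^{p_1}-NC_\epsilon|u|_q^q>0$ for $\|u\|$ small, paralleling Claim \ref{4303}. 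This argument avoids any assumption relating $p^\ast$ to $p_N$.

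It remains to build the weakly closed cone $X^r$, the rearrangement $Q\colon X^+\to X^r$ obeying $(X_6)$--$(X_8)$, the semicontinuity $(F_5)$, and the identity $\inf_{\mathcal P}I_2=\inf_{\mathcal P^+}I_2$. The identity is immediate from $I_2(u)\ge I_2(u^+)$ and Lemma \ref{pr2}, exactly as in Claim \ref{igualdade}; and once $X^r$ embeds compactly into $L^q(\mathbb{R}^N)$ for subcritical $q$, $(F_5)$ follows by treating the $L^{p_1}$-term through weak lower semicontinuity as in Claim \ref{a4305}. I expect this last block to be the main obstacle, and the reason it is harder than in Section 4 is that the anisotropic operator is not rotationally invariant, so the Schwarz symmetrization used there is unavailable: it satisfies Pólya--Szegő only for the full Euclidean gradient and need not decrease the individual terms $\int|\partial_{x_i}u|^{p_i}$. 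I would therefore replace it by successive one-dimensional Steiner symmetrizations in the coordinate directions, each of which does not increase any $\int|\partial_{x_j}u|^{p_j}$ (hence delivers $(X_6)$ termwise), raises $\Phi$ (giving $(X_7)$), and leaves the resulting coordinate-symmetric monotone cone invariant under dilations ($(X_8)$); the genuinely delicate point is securing the compact embedding of that cone into $L^q$, via a Strauss-type pointwise decay estimate adapted to coordinate monotonicity. Once $(X_6)$ and this compactness are established, all hypotheses of Corollary \ref{cor426} hold, $I_2$ possesses a nontrivial critical point, and $(P_2)$ has a nontrivial solution.
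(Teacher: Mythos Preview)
Your verification of $(X_1)$--$(X_5)$, $(F_1)$--$(F_6)$, and the equality $\inf_{\mathcal P}I_2=\inf_{\mathcal P^+}I_2$ matches the paper's treatment almost verbatim; the paper in fact just writes ``with few modifications, we can argue as in Section 4'' and lists items (1)--(7) that are precisely the checks you outline (their item (4), the control of $\int|u|^{p_1}$ by a power of $\sum_i\psi_i(u)^{1/p_i}$, is your $(F_4)$ argument in condensed form).

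The divergence is at the rearrangement $Q$. You assume Schwarz symmetrization is unavailable because it need not decrease the individual terms $\int|\partial_{x_i}u|^{p_i}$, and therefore propose iterated Steiner symmetrizations together with a coordinate-monotone cone $X^r$, whose compact embedding into $L^q$ you acknowledge as the delicate point. The paper does \emph{not} take that detour: it keeps $Q(u)=(u^+)^*$ and $X^r=\{u\in W^{1,\vec p}_{rad}(\mathbb{R}^N)\cap X^+:\,u\ \text{radially nonincreasing}\}$, exactly as in Section~4, and supplies the needed anisotropic P\'olya--Szeg\H{o} inequality
\[
\Big\|\frac{\partial u^*}{\partial x_i}\Big\|_{p_i}^{p_i}\le\Big\|\frac{\partial u}{\partial x_i}\Big\|_{p_i}^{p_i},\qquad i=1,\dots,N,
\]
as a separate statement (Lemma~\ref{radial}), with proof deferred to Hajaiej's \emph{Generalized P\'olya--Szeg\H{o} Inequality}. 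Granting that lemma, $(X_6)$--$(X_8)$ are immediate and the compact embedding $X^r\hookrightarrow L^q(\mathbb{R}^N)$ for $q\in(p_1,p^*)$ follows from the Berestycki--Lions radial decay lemma with no new work. So the block you flag as ``the main obstacle'' the paper resolves by citing an external inequality rather than by changing the symmetrization.

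Your Steiner route is a reasonable alternative and is more self-contained, but it is also heavier: you would still owe (i) the fact that Steiner symmetrization in $e_j$ does not increase $\int|\partial_{x_i}u|^{p_i}$ for \emph{every} $i$ (immediate for $i=j$ by one-dimensional P\'olya--Szeg\H{o}, but a genuine argument for $i\neq j$), and (ii) the Strauss-type decay and compactness for the coordinate-monotone cone. The paper's approach trades both of these for a single black-box inequality and reuses the radial machinery of Section~4 unchanged.
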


The reader can find some results associated with anisotropic problems in Alves and El Hamidi \cite{AH}, El Hamidi and Rakotoson, \cite{HR1,HR2,HR3},  Fragala, Gazzola, and Kawohl \cite{FGK} and their references.

Hereafter, we fix $\overrightarrow{p}=(p_{1},...,p_{N})$  and define the anisotropic Sobolev space $W^{1,\overrightarrow{p}}(\mathbb{R}^{N})$ by    
$$
W^{1,\overrightarrow{p}}(\mathbb{R}^{N}) = \left\{u \in L^{p_{1}}(\mathbb{R}^{N}); \frac{\partial u}{\partial x_{i}} \in L^{p_{i}}(\mathbb{R}^{N}) \,\,\, i=1,2,...,N \right\}
$$
endowed with the norm 
$$
||u||:=\left(\int_{\mathbb{R}^{N}}|u|^{p_{1}}\, dx \right)^{\frac{1}{p_{1}}} + \sum_{i=1}^{N}\left(\int_{\mathbb{R}^{N}}\left|\frac{\partial u}{\partial x_{i}}\right|^{{p_{i}}}\,dx \right)^{\frac{1}{p_{i}}}.
$$ 

Related to the space $(W^{1,\overrightarrow{p}}(\mathbb{R}^{N}), ||\,\,\,||)$, it is possible to prove that it is a Reflexive Banach space and $C_{0}^{\infty}(\mathbb{R}^{N})$ is dense in $W^{1,\overrightarrow{p}}(\mathbb{R}^{N})$. Moreover, the space $W^{1,\overrightarrow{p}}(\mathbb{R}^{N})$ is continuously embedding in $L^{q}(\mathbb{R}^{N})$ for all  $q \in [p_{1}, p^{\ast}]$. For more details about this subject see Nikol'skii \cite{Niko} and Rakosnik \cite{Rak1, Rak2}.

\vspace{0.5 cm}

The proof of the next lemma follows the same ideas explored in \cite[Lemma 2.5 and Theorem 3.1]{Hajaiej} and also its proof will be omit. 
\begin{lemma} \label{radial}
	If $u \in W^{1, \overrightarrow{p}}(\mathbb{R}^{N})$ is a nonnegative function, then 
	$$
\left|\left|\frac{\partial u^{\ast}}{\partial x_{i}}\right|\right|^{p_i}_{p_{i}} \leq \left|\left|\frac{\partial u}{\partial x_{i}}\right|\right|^{p_i}_{p_{i}}, \quad \forall i \in \{1,2,...,n\}, 
	$$
where  $u^*$ is the Schwartz's symmetrization of $u$.
\end{lemma}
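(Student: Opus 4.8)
The plan is to reach the Schwartz symmetrization $u^*$ through a finite-to-infinite sequence of one-dimensional (Steiner) symmetrizations, proving the desired coordinate-wise inequality at each elementary step and then passing to the limit by lower semicontinuity. First I would reduce to the case where $u$ is nonnegative, Lipschitz and compactly supported (a dense subclass of $W^{1,\overrightarrow{p}}(\mathbb{R}^N)$), so that the coarea formula and the slice-wise arguments below are legitimate; the general case then follows by approximation together with the weak lower semicontinuity of the convex functionals $v \mapsto \left\|\partial v/\partial x_j\right\|_{p_j}^{p_j}$. Note the stated powered inequality is equivalent to the unpowered one, since $t \mapsto t^{p_j}$ is increasing on $[0,\infty)$.

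For each direction $i$, let $S_i u$ denote the Steiner symmetrization of $u$ with respect to the hyperplane $\{x_i=0\}$, obtained by replacing, for each fixed tuple $x'=(x_1,\dots,\widehat{x_i},\dots,x_N)$, the slice $x_i \mapsto u(x',x_i)$ by its one-dimensional symmetric decreasing rearrangement. The heart of the argument is the coordinate-wise Pólya--Szegő estimate
$$
\left\|\frac{\partial (S_i u)}{\partial x_j}\right\|_{p_j} \le \left\|\frac{\partial u}{\partial x_j}\right\|_{p_j}, \qquad \forall\, i,j \in \{1,\dots,N\}.
$$
When $j=i$ this is immediate: for a.e. slice $x'$ the one-dimensional rearrangement does not increase $\int_{\mathbb{R}} \left|\partial u/\partial x_i\right|^{p_i}\,dx_i$, by the elementary one-dimensional Pólya--Szegő inequality; integrating over $x'$ gives the claim.

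The \emph{main obstacle} is the transverse case $j \neq i$, where the derivative is orthogonal to the direction of symmetrization. Here I would represent $S_i u$ through its layer-cake/distribution function: writing $\mu(x',t) = \tfrac12\,\bigl|\{x_i\in\mathbb{R}: u(x',x_i)>t\}\bigr|$, the super-level sets of $S_i u$ are the symmetric slabs $\{|x_i|<\mu(x',t)\}$, and the coarea formula rewrites $\left\|\partial(S_iu)/\partial x_j\right\|_{p_j}^{p_j}$ as an integral over levels of a quantity governed by the transverse gradient $\nabla_{x'}\mu$. A levelwise convexity (Jensen) argument, combined with the monotonicity of the one-dimensional rearrangement and the fact that rearranging the slices can only contract the transverse variation of the super-level sets, then shows that this transverse energy does not exceed the corresponding quantity for $u$. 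This is exactly the estimate established in \cite[Lemma 2.5]{Hajaiej}, and it is the only genuinely delicate point.

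Finally I would iterate. Cycling the Steiner symmetrizations through all coordinate directions (interspersed with rotations, or using any scheme known to converge) produces a sequence $u_k$ converging to the radial decreasing rearrangement $u^*$ in $L^{p_1}(\mathbb{R}^N)$, which is the content of \cite[Theorem 3.1]{Hajaiej}. By the per-step estimate above, each $\left\|\partial u_k/\partial x_j\right\|_{p_j}$ is non-increasing in $k$, hence bounded, so $(u_k)$ is bounded in $W^{1,\overrightarrow{p}}(\mathbb{R}^N)$ and, after passing to a subsequence, $\partial u_k/\partial x_j \rightharpoonup \partial u^*/\partial x_j$ weakly in $L^{p_j}$. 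Weak lower semicontinuity of the norm then gives $\left\|\partial u^*/\partial x_j\right\|_{p_j} \le \liminf_k \left\|\partial u_k/\partial x_j\right\|_{p_j} \le \left\|\partial u/\partial x_j\right\|_{p_j}$ for every $j$, which is the assertion of the lemma. The two points to monitor throughout are the transverse estimate of the previous paragraph and ensuring that the iterated symmetrizations converge while keeping all $N$ derivative norms simultaneously controlled.
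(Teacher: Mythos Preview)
Your outline is correct and is exactly the route the paper indicates: the paper omits the proof entirely, stating only that it ``follows the same ideas explored in \cite[Lemma 2.5 and Theorem 3.1]{Hajaiej}'', and your two key ingredients---the coordinate-wise P\'olya--Szeg\H{o} inequality under a single Steiner symmetrization (Hajaiej's Lemma~2.5, including the transverse case $j\neq i$) and the convergence of iterated Steiner symmetrizations to the Schwartz rearrangement (Hajaiej's Theorem~3.1)---are precisely those two cited results. Your closing step via weak lower semicontinuity is the natural way to pass to the limit, so nothing is missing.
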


Our intention is proving that the energy functional $I_2: W^{1, \overrightarrow{p}}(\mathbb{R}^{N}) \to \mathbb{R}$ given by
$$
I_2(u)= \sum_{i=1}^{N}\frac{1}{p_i}\int_{\mathbb{R}^{N}}\left|\frac{\partial u}{\partial x_{i}}\right|^{p_{i}}dx +\int_{\mathbb{R}^{N}}|u|^{p_{1}}dx -\int_{\mathbb{R}^{N}}F(u)dx
$$
satisfies the conditions of Theorem \ref{T1}, because the critical points of $I_2$ are weak solutions of $(P_2)$. Since $I_2$ belongs to $C^{1}(W^{1, \overrightarrow{p}}(\mathbb{R}^{N}),\mathbb{R})$ we will prove that $I_2$ verifies the conditions of Corollary \ref{cor426}. Having this in mind, in what follows we define 
$\psi_{i}, \Phi:W^{1,\overrightarrow{p}}(\mathbb{R}^{N}) \rightarrow \mathbb{R}$ by 
$$
\psi_{i}(u)= \frac{1}{p_i}\int_{\mathbb{R}^{N}}\left|\frac{\partial u}{\partial x_{i}}\right|^{p_{i}}dx, \quad \mbox{for} \quad i \in \{1,2,...,N\}
$$
$$
\Phi(u)=\int_{\mathbb{R}^{N}}G(u)dx=\int_{\mathbb{R}^{N}}F(u)dx-\frac{1}{p_1}\int_{\mathbb{R}^{N}}|u|^{p_{1}}dx
$$
$$
X= W^{1, \overrightarrow{p}}(\mathbb{R}^{N}), \quad X^{+}=\{u \in  W^{1, \overrightarrow{p}}(\mathbb{R}^{N})\,:\, u(x) \geq 0 \quad \mbox{a.e. in} \quad \mathbb{R}^N\}
$$
and
$$
X^r=\{u \in  W_{rad}^{1, \overrightarrow{p}}(\mathbb{R}^{N}) \cap X^+\,:\, 0 \leq u(x) \leq u(y) \quad \mbox{if} \quad 0<|y| \leq |x| \}.
$$

Arguing as \cite[Radial Lemma A.IV ]{berest} it is possible to prove that $X^r$ is compactly embedding $L^{q}(\mathbb{R}^N)$ for all $q \in (p_1,p^{*}). $

From the above notations 
$$
I_2(u)=J(u)-\Phi(u), \quad \forall u \in X= W^{1, \overrightarrow{p}}(\mathbb{R}^{N}).
$$ 

As in the previous section, we also consider $\ast(t,u):=u_{t}: \mathbb{R}^N \to \mathbb{R}$  by 
$$
u_{t}(x)=
\left\{
\begin{array}{l}
u \left(\frac{x}{t}\right), \quad \mbox{ for } \quad t > 0, \\
0, \quad \mbox{for} \quad t=0.
\end{array}
\right.
$$	
A simple computation gives,
$$
\psi_i(u_t) = t^{N-p_{i}}\psi_{i}(u) \quad \mbox{and} \quad \Phi(u_t) = t^{N}\Phi(u), \quad \forall t \geq 0 \quad \mbox{and} \quad \forall u \in W^{1, \overrightarrow{p}}(\mathbb{R}^{N}).
$$
Moreover,  the application $t  \mapsto u_{t}$ is a continuous function in $t \in [0, +\infty)$ for all $u \in W^{1,\overrightarrow{p}}(\mathbb{R}^{N})$ and 
$$
u_t \in X^r \quad \forall t \in \mathbb{R} \quad \mbox{when} \quad u \in X^r.
$$
In the sequel, we set $Q:W^{1, \overrightarrow{p}}(\mathbb{R}^{N}) \longrightarrow X^r $ by 
$$
Q(u)=(u^{+})^{\ast}.
$$
By using the Lemma \ref {radial} and properties of radial functions, it follows that
$$
\psi_{i}(Q(u)) \leq \psi_{i}(u) \quad \mbox{and} \quad \Phi(u) = \Phi(Q(u)), \quad \forall u \in X^+.
$$
Moreover, with few modifications, we can argue as in Section 4 to show the claims below:
\begin{itemize}
	\item (1) There exists $u \in W^{1, \overrightarrow{p}}(\mathbb{R}^{N})$ such that $\Phi(u)>0$.
	\item (2) $\psi_{i}(u) \geq 0, \,\, \forall i \in \{1,2,...,N\}$ and $J(u)=0 \Leftrightarrow u=0$ . 
	\item (3) There is $r>0$ such that 
	$$
	\sum_{i=1}^{N}\frac{(N-p_{i})}{p_{i}}\int_{\mathbb{R}^{N}}\left|\frac{\partial u}{\partial x_i}\right|^{p_{i}}dx- N\Phi(u)>0 \quad \mbox{for} \quad 0<\|u\|<r.
	$$
	\item (4) By Sobolev embedding, there is a positive $C_1$ such that 
	$$
	\int_{\mathbb{R}^N}|u|^{p_1}\,dx \leq C_1\left( \sum_{i=1}^{N}(\psi_{i}(u))^{\frac{1}{p_i}}\right)^{p^*}, \quad \forall u \in W^{1, \overrightarrow{p}}(\mathbb{R}^{N}) \quad \mbox{with} \quad \Phi(u)>0.
	$$	
	\item (5) If $(u_k) \subset X^r$ is weakly convergent to $u$ in $W^{1, \overrightarrow{p}}(\mathbb{R}^{N})$, by compact embedding of $X^r$ in $L^{q}(\mathbb{R}^N)$ for all $q \in (p_0,p^*)$, we have  
	$$
	\limsup_{k \to +\infty} \Phi(u_{k}) \leq \Phi(u).
	$$
	\item (6) If $(u_k) \subset W^{1, \overrightarrow{p}}(\mathbb{R}^{N})$ is weakly convergent to $u$ in $W^{1, \overrightarrow{p}}(\mathbb{R}^{N})$, then 
	$$
	\liminf_{k \to +\infty}  \left(\sum_{i=1}^{N}\psi_{i}(u_{k})\right) \geq \sum_{i=1}^{N}\psi_{i}(u).
	$$
	\item (7) 	$\displaystyle \inf_{w \in \mathcal{P}}I_2(w)=\inf_{w \in \mathcal{P}^+}I_2(w).$ 	
\end{itemize}

From the above commentaries, the Theorem \ref{ANST} is proved, because all the conditions of Theorem \ref{T1} were verified.

\section{An application involving discontinuous nonlinearity}
In this section we consider the existence of solution for the problem
\begin{equation} \label{DP}
-\Delta u(x) \in \partial G(u(x)), \quad \mbox{a.e. in} \quad \mathbb{R}^N,
\end{equation}
where $N \geq 1$, $G$ is the primitive of a function $g:\mathbb{R} \to \mathbb{R}$ given by $g(s)=f(s)-s$, that is, 
$$
G(s)=\int_{0}^{s}g(t)\,dt=\int_{0}^{s}f(t)\,dt-\frac{1}{2}|s|^{2}=F(s)-\frac{1}{2}|s|^{2}
$$
and $\partial G(s)$ is the generalized gradient of $G$ at $s \in \mathbb{R}$, given by 
$$
\partial G(s)=[\underline{g}(s),\overline{g}(s)]
$$
where
$$
\underline{g}(s)=\displaystyle \lim_{r\downarrow 0}\mbox{ess
	inf}\left\{ g(t);|s-t|<r\right\} \quad \mbox{and} \quad \overline{g}(s)=\lim_{r\downarrow 0}\mbox{ess sup}\left\{
g(t);|s-t|<r\right\} .
$$

When $g$ is a continuous function, which is equivalent to say that $f$ is continuous, we know that $G \in C^{1}(\mathbb{R}, \mathbb{R})$ and in this case  
$$
\partial G(s)=\{g(s)\}, \quad \forall s \in \mathbb{R}.
$$

In this section, we assume that $f$ can have a finite number of  discontinuity points $a_1,a_2, ...., a_p \in \mathbb{R} \setminus \{0\}$. Moreover,  $f$ verifies $(f_1),(f_3)$ and the condition \\ 

\noindent $(f_7)$ \,\, There are $A,B>0$ such that 
$$
	|f(s)| \leq A|s|+B|s|^{q}, \quad \forall s \in \mathbb{R},
$$
for some $q \in (1, 2^{\ast}-1)$ where $2^{\ast}= \frac{2N}{N-2}$ if $N \geq 3$ and $2^*=+\infty$ if $N=1,2$.
	
Since we intend to find a nonnegative solution, in what follows we assume that 
$$
f(s)=0, \quad \forall s<0. 
$$

Hereafter, by a solution we understand as being a function $u \in W_{loc}^{2, \frac{q+1}{q}}(\mathbb{R}^{N}) \cap H^{1}(\mathbb{R}^{N})$ that verifies (\ref{DP}), or equivalently, the problem below
\begin{equation} \label{equivalente}
-\Delta u(x)  +u(x) \in \left[\underline{f}(u(x)), \overline{f}(u(x))\right], \quad \mbox{a.e. in} \quad \mathbb{R}^N.
\end{equation}
For the case where $f$ is a continuous function, the above solution must verify the equation 
$$
-\Delta u(x)  +u(x) = f(u(x)), \quad \mbox{a.e. in} \quad \mathbb{R}^N.
$$

A rich literature is available by now on problems with discontinuous nonlinearities, and we refer the reader to  Alves, Bertone and Gon\c calves \cite{ABG}, Alves and Bertone \cite{AB}, Alves, Gon\c calves and Santos \cite{Abrantes},  Ambrosetti and Turner \cite{turner}, Ambrosetti, Calahorrano and Dobarro \cite{ACD}, Badiale and Tarantelo \cite{Badiale}, Carl, Le and Motreanu \cite{carl}, Clarke \cite{clarke}, Chang \cite{Chang}, Carl and Dietrich \cite{CD1}, Carl and S. Heikkila \cite{CD2,CD3}, Cerami \cite{cerami}, Hu, Kourogenis and Papageorgiou \cite{Hu}, Montreanu and Vargas \cite{MV}, Radulescu \cite{Radulescu} and their references. Several techniques have been developed or applied in their study, such as variational methods for nondifferentiable functionals, lower and upper solutions, global branching, fixed point theorem, and the theory of multivalued mappings.

The main idea is showing the existence of a nontrivial critical point of the functional energy associated with problem $I_3:H^{1}(\mathbb{R}^N) \to \mathbb{R}$ given 
$$
I_3(u)= \frac{1}{2}\int_{\mathbb{R}^{N}}|\nabla u|^{2}dx + \frac{1}{2}\int_{\mathbb{R}^{N}}|u|^{2}dx - \int_{\mathbb{R}^{N}}F(u)dx.
$$
Since $I_3$ is locally Lipschitz, a function $u \in H^{1}(\mathbb{R}^N)$ is a critical point of $I_3$, if $0 \in \partial I_3(u)$, where $\partial I_3(u)$ denotes the generalized gradient of $I_3$. We recall The generalized gradient of $I_3$ at $u$  is the set 
$$
\partial I_3(u)=\{ \mu \in  (H^{1}(\mathbb{R}^N))^*\,:\, I_{3}^{0}(u,v) \geq \langle \mu,v \rangle, \, \forall v \in H^{1}(\mathbb{R}^N) \}
$$
where $I_{3}^{0}(u,v)$ denotes the directional derivative of $I_3$ on $u$ in the direction of $v \in H^{1}(\mathbb{R}^N)$ defined by
$$
I_{3}^{0}(u,v)=\limsup_{h \to 0, \lambda \downarrow 0}\frac{I_3(u+h+\lambda v)-I_3(u+h)}{\lambda}.
$$
The reader can find more details about this subject in Chang \cite{Chang} and Clarke \cite{clarke}.   
 
\vspace{0.5 cm}

The main result this section is the following

\begin{theorem} \label{TD}
	Assume the conditions $(f_1)-(f_3)$ and $(f_7)$. Then, (\ref{DP}) has a nontrivial solution. 
\end{theorem}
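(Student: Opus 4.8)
The plan is to cast $I_3$ into the abstract framework and apply the locally Lipschitz version of Theorem~\ref{T1}. Because $f$ is only bounded with finitely many jumps, its primitive $F$ is locally Lipschitz but not $C^1$, so $I_3$ is locally Lipschitz and the smooth Corollary~\ref{cor426} is unavailable; Theorem~\ref{T1} itself is exactly what is needed. I would set $X=H^1(\mathbb{R}^N)$, take the single functional $\psi_1(u)=\frac12\int_{\mathbb{R}^N}|\nabla u|^2\,dx$ (so $J=\psi_1$), put $\Phi(u)=\int_{\mathbb{R}^N}F(u)\,dx-\frac12\int_{\mathbb{R}^N}|u|^2\,dx=\int_{\mathbb{R}^N}G(u)\,dx$, and use the dilation $\ast(t,u)=u_t$, $u_t(x)=u(x/t)$ for $t>0$ and $u_0=0$, exactly as in Sections~4 and~5. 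This yields $\psi_1(u_t)=t^{N-2}\psi_1(u)$ and $\Phi(u_t)=t^{N}\Phi(u)$, i.e. $\lambda_1=N-2$ and $\lambda_\Phi=N$; condition $(X_3)$ then demands $0<N-2<N$, forcing $N\ge 3$. The low-dimensional cases $N=1,2$, where the dilation degenerates and $(X_3)$ fails, I would treat separately by a direct radial mountain-pass argument in the locally Lipschitz setting, available because the positive mass term $\frac12\int|u|^2$ already supplies the mountain-pass geometry. The remaining ingredients $X^+=\{u\ge 0\}$, $X^r$ (radial, nonincreasing, nonnegative functions) and $Q(u)=(u^+)^\ast$ are taken verbatim from Section~4.

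With this dictionary, $(X_1)$--$(X_5)$ follow from the scaling identities and the continuity of $t\mapsto u_t$; $(X_6)$ is the P\'olya--Szeg\H{o} inequality, $(X_7)$ holds because $\int G(u)$ is preserved by symmetrization of a nonnegative function, and $(X_8)$ is clear since a dilate of a radial nonincreasing function keeps these properties. Conditions $(F_1)$--$(F_6)$ are obtained exactly as in Claims~\ref{4302}--\ref{a4306}, now using $(f_1)$, $(f_3)$ and the growth bound $(f_7)$: $(F_1)$ via a test function concentrated where $G=G(\tau)>0$ (condition $(f_4)$), $(F_3)$ via the estimate $\lambda_1\psi_1(u)-\lambda_\Phi\Phi(u)\ge C\|u\|^2-C_1\|u\|^{q+1}$ (note that $(f_7)$ bounds $f$, so $F$ carries the exponent $q+1$), $(F_4)$ via the same bootstrap on $\int|u|^2$ through the critical Sobolev exponent, $(F_5)$ via the compact embedding of $X^r$ into $L^p(\mathbb{R}^N)$, $p\in(2,2^\ast)$, and $(F_6)$ via the convexity of $\psi_1$. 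The identity $\inf_{\mathcal{P}}I_3=\inf_{\mathcal{P}^+}I_3$ is proved as in Claim~\ref{igualdade}, using $I_3(u)\ge I_3(u^+)$, the inequality $\Phi(u^+)\ge\Phi(u)$, and the unique dilation projection onto $\mathcal{P}$ from Lemma~\ref{pr2}.

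Once the hypotheses are verified, Theorem~\ref{T1} produces $u\in\mathcal{P}$ with $I_3(u)=\inf_{\mathcal{P}}I_3>0$ and $0\in\partial I_3(u)$. The genuinely new step is to convert this inclusion into a solution of (\ref{equivalente}). Since $\psi_1$ and $\frac12\int|u|^2$ are $C^1$, the Clarke calculus gives $\partial I_3(u)=\{-\Delta u+u\}-\partial\!\left(\int_{\mathbb{R}^N}F(u)\,dx\right)$, and by the Aubin--Clarke / Chang theorem the set $\partial\!\left(\int_{\mathbb{R}^N}F(u)\,dx\right)$ consists of functionals represented by measurable $\rho$ with $\rho(x)\in[\underline{f}(u(x)),\overline{f}(u(x))]$ a.e. Hence $0\in\partial I_3(u)$ furnishes such a $\rho$ with $-\Delta u+u=\rho$ in the weak sense, which is precisely (\ref{equivalente}). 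Finally $(f_7)$ gives $\rho-u\in L^{(q+1)/q}_{loc}(\mathbb{R}^N)$, so Calder\'on--Zygmund regularity upgrades $u$ to $W^{2,(q+1)/q}_{loc}(\mathbb{R}^N)\cap H^1(\mathbb{R}^N)$, the claimed notion of solution.

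I expect the nonsmooth step to be the main obstacle: realizing the abstract conclusion $0\in\partial I_3(u)$ through an a.e.\ measurable selection of $[\underline{f}(u),\overline{f}(u)]$, and checking consistency of the inclusion on the level sets $\{u=a_i\}$ attached to the discontinuities of $f$. Since $\nabla u=0$ a.e.\ on each such level set (Stampacchia's lemma), one has $-\Delta u=0$ there and the inclusion is automatically satisfied, but making the measurable-selection argument and the regularity bootstrap rigorous for every admissible $q$ is where the care lies. By contrast, the verification of $(X_1)$--$(X_8)$ and $(F_1)$--$(F_6)$ is routine once the scaling dictionary above is fixed, being a direct transcription of Sections~4 and~5.
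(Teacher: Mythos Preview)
Your plan matches the paper's proof essentially line for line: the same choice of $X$, $\psi_1$, $\Phi$, dilation $\ast$, $X^r$ and $Q=(u^+)^\ast$, the same verification of $(X_1)$--$(X_8)$ and $(F_1)$--$(F_6)$ by transcription of the Section~4 claims, and the same passage from $0\in\partial I_3(u)$ to the differential inclusion via the Chang-type inclusion $\partial\Phi_1(u)\subset[\underline{f}(u),\overline{f}(u)]$ followed by elliptic regularity. You are in fact more careful than the paper on two points it glosses over: that $(X_3)$ with $\lambda_1=N-2$ forces $N\ge 3$ (the paper writes $N\ge 1$ but its scaling argument has the same restriction), and that the verification of $(F_1)$ really uses $(f_4)$, which the paper also invokes implicitly through Claim~\ref{4302} without listing it among the hypotheses of Theorem~\ref{TD}.
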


As in the previous section, we will show that functional $I_3$ verifies the conditions of Theorem \ref{T1}. In what follows, 
$$
X=H^{1}(\mathbb{R}^N), \quad X^{+}=\{u \in H^{1}(\mathbb{R}^N)\,:\, u(x) \geq 0 \quad \mbox{a.e. in} \quad \mathbb{R}^N\},
$$
$$
X^r=\{u \in H_{rad}^{1}(\mathbb{R}^N) \cap X^+\,:\, 0 \leq u(x) \leq u(y) \quad \mbox{if} \quad 0<|y| \leq |x| \},
$$
$$
J(u) = \psi(u)=\frac{1}{2}\int_{\mathbb{R}^N}|\nabla u|^{2}\,dx
$$
and
$$
\Phi(u) = \int_{\mathbb{R}^{N}}G(u)\,dx.
$$
By using these notations, 
$$
I_3(u)=J(u)-\Phi(u), \quad \forall u \in X=H^{1}(\mathbb{R}^N).
$$ 
A well known argument shows that $\Phi$ is locally Lipschitz, $J \in C^{1}(H^{1}(\mathbb{R}^N), \mathbb{R})$ and $I_3$ is locally Lipschitz.

Moreover, as in the previous sections, we set $\ast(t,u):=u_{t}: \mathbb{R}^N \to \mathbb{R}$  by 
$$
u_{t}(x)=
\left\{
\begin{array}{l}
u \left(\frac{x}{t}\right), \quad \mbox{ for } \quad t \not= 0, \\
0, \quad \mbox{for} \quad t=0.
\end{array}
\right.
$$	
By definition of $\psi$ and $\Phi$, it follows that 
$$
\Phi(u_{t})=t^{N}\Phi(u) \mbox{ and } \psi(u_{t})=t^{N-2}\psi(u), \quad \forall t \geq 0 \quad \mbox{and} \quad  \forall u \in H^{1}(\mathbb{R}^N).
$$
From the above commentaries, we have proved $(X_1)-(X_4)$ and $(F_2)$ of Theorem \ref{T1}. Moreover, as in the previous sections, for each $u \in H^{s_{n}}(\mathbb{R}^{N})$, the application $t \longmapsto u_{t}$ is continuous, and so, $(X_5)$ is also proved.  

Next, as in Section 4, we will prove the other conditions of Theorem \ref{T1} also occur.  Setting 
	$$
	\begin{array}{cccl}
	Q:& H^{1}(\mathbb{R}^{N})& \longrightarrow& X^r \\
	&u& \longmapsto &(u^{+})^{\ast}	,
	\end{array}
	$$
	where  $(u^{+})^{\ast}$ is the Schwartz's symmetrization of $u^{+}$, standard arguments show that $(X_6)-(X_8)$ also hold. 
	\begin{claim}( {\bf Proof of $(F_1)$})
		There is $u \in H^{1}(\mathbb{R}^{N})$ such that $\Phi(u)>0$ and $\Phi(0)=0$. 
	\end{claim}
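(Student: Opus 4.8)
The plan is to reproduce, almost verbatim, the argument used for $(F_1)$ in Section~4 (Claim~\ref{4302}), because the functional $\Phi(u)=\int_{\mathbb{R}^N}G(u)\,dx$ has exactly the same structure here, with $G(s)=F(s)-\frac{1}{2}|s|^2$. The identity $\Phi(0)=0$ is immediate from $G(0)=F(0)=0$, so the entire content of the claim is to exhibit a single $u\in H^1(\mathbb{R}^N)$ with $\Phi(u)>0$.

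First I would invoke $(f_4)$ to fix $\tau>0$ with $G(\tau)>0$; this is the essential ingredient, since $(f_1)$ forces $G(s)<0$ for small $s>0$, and without a point of positivity of $G$ the claim would fail. For each $k\in\mathbb{N}$ I would then choose a cutoff $\phi_k\in C_0^\infty(B_{1+\frac{1}{k}}(0))$ with $\phi_k\equiv\tau$ on $B_1(0)$ and $0\leq\phi_k\leq\tau$ everywhere; such $\phi_k$ clearly lies in $H^1(\mathbb{R}^N)$. Splitting the integral,
\[
\Phi(\phi_k)=\int_{B_1(0)}G(\phi_k)\,dx+\int_{B_{1+\frac{1}{k}}(0)\setminus B_1(0)}G(\phi_k)\,dx,
\]
the first term equals $G(\tau)\,|B_1(0)|>0$ because $\phi_k\equiv\tau$ there, while the second is bounded below by $-\bigl(\sup_{|t|\leq\tau}|G(t)|\bigr)\,|B_{1+\frac{1}{k}}(0)\setminus B_1(0)|$. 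Since the annular measure tends to $0$ as $k\to\infty$, choosing $k$ large enough makes the positive bulk term dominate, yielding $\Phi(\phi_k)>0$, which proves the claim with $u=\phi_k$.

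The only point where the present setting differs from Section~4 is that here $f$ is merely assumed to have finitely many discontinuities, so it is worth remarking that this causes no difficulty. Indeed, by $(f_7)$ the function $f$ is bounded on $[0,\tau]$, hence $F(s)=\int_0^s f$ is Lipschitz on $[0,\tau]$ and $G=F-\frac{1}{2}|\cdot|^2$ is continuous there; consequently $\sup_{|t|\leq\tau}|G(t)|<\infty$ and the estimate on the annulus above is legitimate. I expect no genuine obstacle in this step: the argument is routine once $(f_4)$ supplies a level at which $G$ is positive, and the discontinuity of $f$ is absorbed harmlessly into the continuity of its primitive $G$.
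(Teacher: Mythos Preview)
Your proposal is correct and follows exactly the route the paper takes: the paper's own proof simply says ``the claim follows with the same arguments explored in Claim~\ref{4302},'' and you have faithfully reproduced that cutoff-function argument, together with a helpful extra remark that the finitely many discontinuities of $f$ do not affect the continuity of $G$ on $[0,\tau]$. Note that, like the paper, you are invoking $(f_4)$; this condition is not explicitly listed among the hypotheses of Section~6, but the paper's reference to Claim~\ref{4302} shows it is tacitly intended.
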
 
	\begin{proof} The claim follows with the same arguments explored in Claim \ref{4302}. 
	\end{proof}	

	\begin{claim} ( {\bf Proof of $(F_3)$})
		There exists $r>0$ such that  
		$$
		(N-2)\psi(u)>N\Phi(u), \quad \mbox{for} \quad 0<||u||< r. 
		$$ 
	\end{claim}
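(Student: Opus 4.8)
The plan is to mimic the computation carried out in Claim \ref{4303}, now with the single functional $\psi$ playing the role that $\psi_n$ played there, and with the scaling exponents $\lambda_{\psi}=N-2$ and $\lambda_{\Phi}=N$. First I would write the difference out explicitly, recalling that $\Phi(u)=\int_{\mathbb{R}^{N}}F(u)\,dx-\frac{1}{2}\int_{\mathbb{R}^{N}}|u|^{2}\,dx$, so that
$$
(N-2)\psi(u)-N\Phi(u)=\frac{N-2}{2}\int_{\mathbb{R}^{N}}|\nabla u|^{2}\,dx+\frac{N}{2}\int_{\mathbb{R}^{N}}|u|^{2}\,dx-N\int_{\mathbb{R}^{N}}F(u)\,dx.
$$

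Next I would control the last term from above. Combining $(f_1)$, which gives $|f(s)|\leq \epsilon|s|$ near the origin, with the global growth bound $(f_7)$, one obtains for each $\epsilon>0$ a constant $C_{\epsilon}>0$ with $|f(s)|\leq \epsilon|s|+C_{\epsilon}|s|^{q}$ for all $s\in\mathbb{R}$; integrating yields the pointwise estimate $F(s)\leq \frac{\epsilon}{2}|s|^{2}+C_{\epsilon}|s|^{q+1}$. The finitely many discontinuities of $f$ are irrelevant here, since $F$ is still absolutely continuous and the integral estimate is unaffected. Substituting this bound and choosing $\epsilon\in(0,1)$ gives
$$
(N-2)\psi(u)-N\Phi(u)\geq \frac{N-2}{2}\int_{\mathbb{R}^{N}}|\nabla u|^{2}\,dx+\frac{N(1-\epsilon)}{2}\int_{\mathbb{R}^{N}}|u|^{2}\,dx-NC_{\epsilon}\int_{\mathbb{R}^{N}}|u|^{q+1}\,dx.
$$

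Since $N\geq 3$, both quadratic coefficients are strictly positive, so the first two terms dominate a multiple of $\|u\|^{2}=\int_{\mathbb{R}^N}|\nabla u|^{2}\,dx+\int_{\mathbb{R}^N}|u|^{2}\,dx$. Because $q+1\in(2,2^{\ast})$, the embedding $H^{1}(\mathbb{R}^{N})\hookrightarrow L^{q+1}(\mathbb{R}^{N})$ bounds the last term by $C\|u\|^{q+1}$, producing
$$
(N-2)\psi(u)-N\Phi(u)\geq C\|u\|^{2}-C_{1}\|u\|^{q+1}
$$
for positive constants $C,C_{1}$. As $q+1>2$, the right-hand side is strictly positive whenever $0<\|u\|<r$ for $r$ small enough, which is the assertion.

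The computation is entirely routine; the only points requiring care are that the coefficient of $\int_{\mathbb{R}^N}|u|^{2}\,dx$ must remain positive after subtracting the $\epsilon$-contribution coming from the estimate on $F$, which forces the choice $\epsilon<1$, and that the whole scheme relies on $N-2>0$ so that the gradient term contributes with a positive sign.
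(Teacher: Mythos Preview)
Your argument is correct and follows exactly the route the paper intends: it simply refers back to Claim~\ref{4303}, and you have reproduced that computation with $\psi$ in place of $\psi_n$, $\lambda_\psi=N-2$, $\lambda_\Phi=N$, and the growth hypothesis $(f_7)$ in place of $(f_2)$ (which accounts for the exponent $q+1$ rather than $q$). Your remark that the step requires $N\geq 3$ so that the gradient coefficient $\frac{N-2}{2}$ is positive is in fact more careful than the paper, which states $N\geq 1$ for problem~$(P_3)$ but whose abstract hypothesis $(X_3)$ already forces $N-2>0$.
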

	\begin{proof} See proof of Claim \ref{4303}
	\end{proof}
	
	\begin{claim} ( {\bf Proof of $(F_4)$})
			Let $(u_{k})$ be a sequence in $H^{1}(\mathbb{R}^{N})$ with $\Phi(u_{k})\geq 0$ for all $k \in \mathbb{N}$. If $(J(u_{k}))$ is bounded,  we have that  $(u_{k})$ is also bounded. Moreover, if $J(u_{k})\rightarrow 0$, then $||u_{k}|| \rightarrow 0$. 
		\end{claim}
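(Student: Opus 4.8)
The plan is to reproduce the scheme of Claim \ref{4304}, reducing both assertions to a single pointwise estimate of the form $\|u\|^2 \le \Lambda(J(u))$, valid for every $u \in H^{1}(\mathbb{R}^N)$ with $\Phi(u)\ge 0$, where $\Lambda:[0,\infty)\to[0,\infty)$ is continuous, nondecreasing and satisfies $\Lambda(0)=0$. Once such an estimate is in hand, boundedness of $(J(u_{k}))$ immediately gives boundedness of $(\|u_{k}\|)$, while $J(u_{k})\to 0$ forces $\|u_{k}\|\to 0$ by continuity of $\Lambda$ at the origin.

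First I would record a growth estimate for $F$. Combining $(f_1)$, which gives $|f(s)|\le \tfrac12|s|$ for $|s|$ small, with $(f_7)$, which controls $f$ for $|s|$ bounded away from zero, one obtains a constant $C>0$ with $F(s)\le \tfrac14|s|^2 + C|s|^{2^{*}}$ for all $s$; here the strict subcriticality $q<2^{*}-1$ of $(f_7)$ is exactly what allows the exponent $q+1$ to be absorbed into $|s|^{2^{*}}$. Using now the hypothesis $\Phi(u)=\int_{\mathbb{R}^N}F(u)\,dx-\tfrac12\int_{\mathbb{R}^N}|u|^2\,dx\ge 0$, I get
\[
\frac12\int_{\mathbb{R}^N}|u|^2\,dx \;\le\; \int_{\mathbb{R}^N}F(u)\,dx \;\le\; \frac14\int_{\mathbb{R}^N}|u|^2\,dx + C\int_{\mathbb{R}^N}|u|^{2^{*}}\,dx,
\]
which after rearranging reads $\tfrac14\int_{\mathbb{R}^N}|u|^2\,dx \le C\int_{\mathbb{R}^N}|u|^{2^{*}}\,dx$.

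The decisive step is to turn the right-hand side into a power of $J(u)$. Applying the Sobolev inequality $\int_{\mathbb{R}^N}|u|^{2^{*}}\,dx \le C\bigl(\int_{\mathbb{R}^N}|\nabla u|^2\,dx\bigr)^{2^{*}/2}$ and recalling that $J(u)=\psi(u)=\tfrac12\int_{\mathbb{R}^N}|\nabla u|^2\,dx$, I obtain $\tfrac14\int_{\mathbb{R}^N}|u|^2\,dx \le C_{1}[J(u)]^{2^{*}/2}$ for a suitable $C_{1}>0$. Consequently,
\[
\|u\|^2 \;=\; \int_{\mathbb{R}^N}|u|^2\,dx + 2J(u) \;\le\; 2J(u) + 4C_{1}[J(u)]^{2^{*}/2},
\]
so that $\Lambda(t)=2t+4C_{1}t^{2^{*}/2}$ does the job. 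Since $2^{*}/2>1$ one has $\Lambda(t)\to 0$ as $t\to 0$ and $\Lambda$ is bounded on bounded sets, which establishes both halves of the claim exactly as in Claim \ref{4304}.

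The main obstacle is precisely the last passage, namely expressing the nonlinear mass $\int_{\mathbb{R}^N}|u|^{2^{*}}\,dx$ in terms of the single quantity $J(u)$. This is what forces one to use the \emph{homogeneous} Sobolev inequality, controlling the $L^{2^{*}}$ norm by $\|\nabla u\|_{2}$ alone rather than by the full $H^{1}$ norm; the relation $\lambda_{\psi}=N-2<N=\lambda_{\Phi}$ of condition $(X_3)$, together with the subcriticality of $q$ in $(f_7)$, is exactly what makes this control available and yields the strictly superlinear exponent $2^{*}/2>1$ needed for the limiting assertion. The low-dimensional situation, where $2^{*}=+\infty$ and the homogeneous embedding is unavailable, is the delicate point of the argument and would have to be handled through the corresponding Gagliardo--Nirenberg estimates with additional care.
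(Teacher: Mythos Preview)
Your argument is correct and reproduces exactly the scheme of Claim~\ref{4304}, which is precisely what the paper's terse proof is pointing to. Your worry about $N\in\{1,2\}$ is in fact unnecessary: the abstract hypothesis $(X_3)$ requires $\lambda_{\psi}=N-2>0$, so only $N\ge 3$ is admissible in this section and the homogeneous Sobolev inequality you invoke is always available, making the proof complete as written.
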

		\begin{proof} An immediate consequence of $J$. 
	\end{proof}
	
	\begin{claim} ( {\bf Proof of $(F_5)$})
		If  $(u_{k})$ is weakly convergent to $u$ in $H^{1}_{rad}(\mathbb{R}^{N})$, then 
		$$
		\limsup_{k \rightarrow \infty}\Phi(u_{k}) \leq \Phi(u).
		$$
	\end{claim}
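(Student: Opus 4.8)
The plan is to follow the argument of Claim~\ref{a4305} almost verbatim, since the only structural change is that here $\Phi(u)=\int_{\mathbb{R}^N}F(u)\,dx-\frac12\int_{\mathbb{R}^N}|u|^2\,dx$ lives on $H^1(\mathbb{R}^N)$ rather than on a fractional space, and the growth of $f$ is now controlled by $(f_7)$ together with $(f_1)$. The strategy is to first prove that the nonlinear part converges, $\int_{\mathbb{R}^N}F(u_k)\,dx\to\int_{\mathbb{R}^N}F(u)\,dx$, and then to conclude using the weak lower semicontinuity of the $L^2$-norm. Since $X^r$ is weakly closed, the weak limit satisfies $u\in X^r$, and a weakly convergent sequence is bounded, so $L:=\sup_{k}\|u_k\|^2<\infty$.

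First I would record, from $(f_1)$ and $(f_7)$, that for each $\epsilon>0$ there is $C_\epsilon>0$ with
$$
F(s)\le \frac{\epsilon}{6L}|s|^2+C_\epsilon|s|^{q+1},\qquad \forall s\in\mathbb{R},
$$
where $q+1\in(2,2^\ast)$ because $q\in(1,2^\ast-1)$. The crucial observation, despite $f$ being discontinuous, is that $F$ is continuous (indeed locally Lipschitz), since $f$ is bounded on bounded sets by $(f_7)$; hence the usual pointwise and dominated-convergence arguments for $F$ remain valid.

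Next I would split $\mathbb{R}^N=B_R\cup B_R^c$. On the exterior region the displayed bound gives $\int_{B_R^c}|F(u_k)|\,dx\le \frac{\epsilon}{6}+C_\epsilon\int_{B_R^c}|u_k|^{q+1}\,dx$, and the compact embedding of $X^r$ into $L^{q+1}(\mathbb{R}^N)$ — which follows from the radial decay estimate \cite[Radial Lemma A.IV]{berest} — lets me choose $R$ and $k_0$ so that $\int_{B_R^c}|F(u_k)|\,dx<\epsilon/3$ for $k\ge k_0$ and $\int_{B_R^c}|F(u)|\,dx<\epsilon/3$. On the ball $B_R$, the Rellich--Kondrachov compact embedding $H^1(B_R)\hookrightarrow L^{q+1}(B_R)$ together with the subcritical growth of $f$ yields $\int_{B_R}|F(u_k)-F(u)|\,dx\to 0$. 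Combining these estimates gives $\limsup_k\int_{\mathbb{R}^N}|F(u_k)-F(u)|\,dx\le\epsilon$, and since $\epsilon$ is arbitrary, $\int_{\mathbb{R}^N}F(u_k)\,dx\to\int_{\mathbb{R}^N}F(u)\,dx$.

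Finally, writing $-\Phi(u_k)=\frac12\int_{\mathbb{R}^N}|u_k|^2\,dx-\int_{\mathbb{R}^N}F(u_k)\,dx$ and using the weak lower semicontinuity of $u\mapsto\int_{\mathbb{R}^N}|u|^2\,dx$ under weak $H^1$-convergence, I would obtain
$$
\liminf_{k\to\infty}\big(-\Phi(u_k)\big)\ge \frac12\int_{\mathbb{R}^N}|u|^2\,dx-\int_{\mathbb{R}^N}F(u)\,dx=-\Phi(u),
$$
which is exactly $\limsup_{k\to\infty}\Phi(u_k)\le\Phi(u)$. The only point needing genuine care — and which I expect to be the main, albeit minor, obstacle — is verifying that the discontinuities of $f$ do not spoil the pointwise convergence $F(u_k)\to F(u)$ on $B_R$; this is resolved by the continuity of the primitive $F$, so that passing to a subsequence converging a.e. and invoking the growth bound as a dominating function suffices.
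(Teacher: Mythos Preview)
Your proposal is correct and follows essentially the same approach as the paper, which simply refers back to the proof of Claim~\ref{a4305}; you have faithfully reproduced that argument with the appropriate adjustments to the $H^1$ setting and the growth condition $(f_7)$. Your explicit remark that the continuity of the primitive $F$ survives the discontinuities of $f$ is a useful addition that the paper leaves implicit.
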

	
	\begin{proof}
	See	proof of Claim \ref{a4305}.
	\end{proof}
	
	\begin{claim} ( {\bf Proof of $(F_6)$})
		Se $(u_{k})$ is weakly convergent to $u$ in $H^{1}(\mathbb{R^{N}})$, then   
		$$ 
		\psi(u) \leq \liminf_{k \rightarrow +\infty}\psi(u_{k}).
		$$
	\end{claim}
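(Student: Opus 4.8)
The plan is to deduce the inequality from the convexity and strong continuity of $\psi$, exactly as was done for the analogous claim in Section 4 (Claim \ref{a4306}). First I would observe that $\psi(u)=\frac{1}{2}\int_{\mathbb{R}^N}|\nabla u|^2\,dx$ is a convex functional on $H^{1}(\mathbb{R}^N)$: the map $u\mapsto\nabla u$ is linear from $H^{1}(\mathbb{R}^N)$ into $L^2(\mathbb{R}^N)^N$, and the functional $v\mapsto\frac{1}{2}|v|^2_{L^2}$ is convex, so their composition $\psi$ is convex. Moreover $\psi$ is continuous with respect to the strong topology of $H^{1}(\mathbb{R}^N)$, since $|\psi(u)-\psi(w)|$ is controlled by the $H^{1}$-distance between $u$ and $w$ on bounded sets.

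Next I would invoke the classical fact that a convex, strongly lower semicontinuous functional on a reflexive Banach space is weakly lower semicontinuous. This follows from Mazur's lemma: for every $a\in\mathbb{R}$ the sublevel set $\{u\in H^{1}(\mathbb{R}^N):\psi(u)\le a\}$ is convex and strongly closed, hence weakly closed. Applying weak lower semicontinuity to the weakly convergent sequence $u_k\rightharpoonup u$ gives at once $\psi(u)\le\liminf_{k\to+\infty}\psi(u_k)$, which is the desired conclusion.

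Alternatively, and perhaps more transparently, I could argue directly in the Hilbert-space setting. Since $u\mapsto\nabla u$ is a bounded linear operator from $H^{1}(\mathbb{R}^N)$ into $L^2(\mathbb{R}^N)^N$, and bounded linear operators are weak-to-weak continuous, the weak convergence $u_k\rightharpoonup u$ in $H^{1}(\mathbb{R}^N)$ forces $\nabla u_k\rightharpoonup\nabla u$ in $L^2(\mathbb{R}^N)^N$. The norm of a Hilbert space is weakly lower semicontinuous, so $|\nabla u|_{L^2}\le\liminf_{k\to+\infty}|\nabla u_k|_{L^2}$, and squaring and multiplying by $\frac{1}{2}$ yields the claim. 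I do not expect any genuine obstacle here; the only point requiring a word of justification is the passage from weak convergence in $H^{1}(\mathbb{R}^N)$ to weak convergence of the gradients in $L^2(\mathbb{R}^N)^N$, and that is immediate from the weak-to-weak continuity of bounded linear maps.
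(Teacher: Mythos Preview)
Your proposal is correct and follows essentially the same approach as the paper: the paper's proof simply refers back to Claim~\ref{a4306}, whose one-line justification is that weak lower semicontinuity is an immediate consequence of the convexity of $\psi$. Your write-up supplies the details (convexity plus strong continuity, Mazur's lemma, or the direct Hilbert-space argument via weak convergence of gradients) that the paper leaves implicit.
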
	
	\begin{proof}
		See proof of Claim \ref{a4306}.
	\end{proof}
		\begin{claim}  The functional $I_3$ verifies the equality below  
		$$
		\inf_{w \in \mathcal{P}}I_3(w)=\inf_{w \in \mathcal{P}^+}I_3(w).
		$$ 	
		\end{claim}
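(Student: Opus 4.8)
The plan is to mimic the proof of Claim~\ref{igualdade} from Section~4, since the scaling structure is identical and the only novelty here, namely that $I_3$ is merely locally Lipschitz, plays no role in this particular argument. First I would record the trivial inclusion $\mathcal{P}^+ \subset \mathcal{P}$, which follows from $X^+ \subset X$ and gives $\inf_{w\in\mathcal{P}} I_3(w) \le \inf_{w\in\mathcal{P}^+} I_3(w)$ at once; thus the whole content lies in the reverse inequality.

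The key pointwise estimate is $I_3(v) \ge I_3(v^+)$ for every $v \in H^{1}(\mathbb{R}^N)$, where $v^+=\max\{v,0\}$. To establish it I would use three facts: $|\nabla v^+| \le |\nabla v|$ and $|v^+| \le |v|$ almost everywhere, together with $F(s)=0$ for $s\le 0$ (since $f$ vanishes on the negative axis), which forces $F(v)=F(v^+)$ pointwise and hence $\int_{\mathbb{R}^N} F(v)\,dx = \int_{\mathbb{R}^N} F(v^+)\,dx$. Comparing the three terms defining $I_3$ then yields the claim, and the same comparison shows $\Phi(v^+) \ge \Phi(v)$.

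Next, fix $u \in \mathcal{P}$. By Lemma~\ref{pr2} one has $\Phi(u)>0$, whence $\Phi(u^+) \ge \Phi(u) > 0$; in particular $u^+ \ne 0$, and since the dilation preserves $X^+$, a further application of Lemma~\ref{pr2} produces a unique $t^+>0$ with $(u^+)_{t^+} \in \mathcal{P}^+$. Using that truncation commutes with the dilation, $(u_t)^+ = (u^+)_t$, together with $I_3(u_t) \ge I_3((u_t)^+) = I_3((u^+)_t)$ and the fact that $I_3(u)=\max_{t\ge 0} I_3(u_t)$ (again Lemma~\ref{pr2}, as $u\in\mathcal{P}$), I would chain
\[
\inf_{w\in\mathcal{P}^+} I_3(w) \le I_3\big((u^+)_{t^+}\big) \le I_3\big(u_{t^+}\big) \le \max_{t>0} I_3(u_t)=I_3(u).
\]
Since $u\in\mathcal{P}$ was arbitrary, this gives $\inf_{w\in\mathcal{P}^+} I_3(w) \le \inf_{w\in\mathcal{P}} I_3(w)$, and combined with the trivial inequality the desired equality follows.

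There is essentially no serious obstacle: the two points that demand a line of care are the gradient truncation inequality $|\nabla v^+|\le|\nabla v|$ and the commutation identity $(u_t)^+=(u^+)_t$, both of which are standard. The only structural hypothesis I would flag is that $f\equiv 0$ on $(-\infty,0)$ is genuinely used to obtain $\int F(v)=\int F(v^+)$; without it the truncation step, and hence the whole comparison, would break down.
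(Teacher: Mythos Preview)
Your proposal is correct and follows exactly the same route as the paper: the paper's own proof simply refers back to Claim~\ref{igualdade}, and your argument is a faithful (and more detailed) reproduction of that proof, including the key inequality $I_3(v)\ge I_3(v^+)$, the projection of $u^+$ onto $\mathcal{P}^+$ via Lemma~\ref{pr2}, and the chain of inequalities yielding $\inf_{\mathcal{P}^+}I_3\le I_3(u)$.
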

	\begin{proof}
		See proof of Claim \ref{igualdade}
	\end{proof}
 From the above analysis, we can deduce that the functional $I_3$ verifies the conditions of Theorem \ref{T1}.  From this, there is $u_0 \in H^{1}(\mathbb{R}^N)$ such that  
$$
0 \in \partial I_3(u_{0}) \quad \mbox{and} \quad I_3(u_0)= \inf_{w \in \mathcal{P}}I_3(w)>0.
$$
As $J$ is $C^{1}(H^1(\mathbb{R}^N), \mathbb{R})$, we have 
$$
J'(u_0)  \in \partial \Phi(u_0).
$$
On the other hand, as 
$$
\Phi(u)=\Phi_1(u)+\Phi_2(u), \quad \forall u \in H^{1}(\mathbb{R}^N)
$$
with
$$
\Phi_1(u)=\int_{\mathbb{R}^N}F(u)\,dx \quad \mbox{and} \quad \Phi_2(u)=\frac{1}{2}\int_{\mathbb{R}^N}|u|^2\,dx,
$$
it follows that
\begin{equation} \label{final}
J'(u_0)+\Phi'_2(u_0) \in \partial \Phi_1(u_0).
\end{equation}
Here, we have used that $\Phi_2 \in C^{1}(H^1(\mathbb{R}^N), \mathbb{R})$.

By using well known result found in \cite{rbgiocl}, we know that 
$$
\partial \Phi_1(u_0) \subset [\underline{f}(u_0),\overline{f}(u_0)].
$$
This combined with (\ref{final}) guarantee the existence of a mensurable function $\rho:\mathbb{R}^N \to \mathbb{R}$ verifying 
\begin{equation} \label{rho}
\rho (x) \in [\underline{f}(u_0(x)),\overline{f}(u_0(x))], \quad \mbox{a.e. in} \quad \mathbb{R}^N, 
\end{equation}
such that $u_{0}$ is a weak solution of the problem  
$$
	-\Delta u_{0}+u_{0}=\rho, \quad \mbox{in} \quad \mathbb{R}^N.
$$ 
By (\ref{rho}), $\rho \in L_{loc}^{\frac{p+1}{p}}(\mathbb{R}^N)$, then the elliptic regularity gives  $u_{0}\in W_{loc}^{2,\frac{q+1}{q}}(\mathbb{R}^{N}) \cap H^{1}(\mathbb{R}^{N})$, 
showing the $u_{0}$ is a nontrivial solution of (\ref{equivalente}). 

\vspace{0.5 cm}

\noindent {\bf Conflict of Interest:} The authors declare that they have no conflict of interest.

\end{document}